\date{\today}
\theoremstyle{plain}
\newtheorem{theorem}{Theorem}[section]
\newtheorem{lemma}[theorem]{Lemma}
\newtheorem{claim}[theorem]{Claim}
\newtheorem{corollary}[theorem]{Corollary}
\newtheorem{remark}[theorem]{Remark}
\newtheorem{definition}[theorem]{Definition}
\title{Unions of random trees and applications}
\author{ Austen James
\thanks{Department of Mathematics, Yale University. Email:
austen.james@yale.edu.} \and Matt Larson \thanks{Department of Mathematics, Yale University. Email: matthew.larson@yale.edu} \and Daniel Montealegre \thanks{Department of Mathematics, Yale University. Email: daniel.montealegre@yale.edu}\and Andrew Salmon \thanks{Department of
Mathematics, Yale University. Email: andrew.salmon@yale.edu.} }
\begin{document}
\maketitle
\begin{abstract}
In 1986, Janson showed that the number of edges in the union of $k$ random spanning trees in the complete graph $K_n$ is a shifted Poisson distribution.  Using results from the theory of electrical networks, we provide a new proof of this result, and we obtain an explicit rate of convergence. This rate of convergence allows us to show a new upper tail bound on the number of trees in $G(n,p)$, for $p$ a constant not depending on $n$. The number of edges in the union of $k$ random trees is related to moments of the number of spanning trees in $G(n, p)$.

As an application, we prove the law of the iterated logarithm for the number of spanning trees in $G(n,p)$. More precisely, consider the infinite random graph $G(\mathbb{N}, p)$, with vertex set $\mathbb{N}$ and where each edge appears independently with constant probability $p$. By restricting to $\{1, 2, \dotsc, n\}$, we obtain a series of nested Erd\"{o}s-R\'{e}yni random graphs $G(n,p)$. We show that a scaled version of the number of spanning trees satisfies the law of the iterated logarithm.
\end{abstract}

\section{Introduction}
One of the most basic questions in probability is the following: Given a set $A$ and two randomly chosen subsets $X$ and $Y$, what is the probability that $X$ and $Y$ intersect? Moreover, we can ask about the distribution of the random variable $|X\cup Y|$. This natural question arises in many different contexts. In particular, it has been studied in the context of graphs.\\
\\
Let $G$ be a labeled graph on $n$ vertices, and let $H$ be some unlabeled graph with at most $n$ vertices. Let $\mathcal{S}(H)$ be the set of subgraphs of $G$ which are isomorphic to $H$. If we choose $H_1,H_2\in \mathcal{S}(H)$ independently, uniformly at random, we can ask ``what is the probability that $H_1$ and $H_2$ intersect?" It is clear that if $G=K_n$ and if $H$ is of fixed size, then the probability tends to zero as $n$ tends to infinity. However, this is not necessarily the case when the size of $H$ varies with $n$. \\
\\
 In 1980, Aspvall and Liang solved what they call the ``dinner table problem": if $n$ people are seated at a circular table for two meals, what is the probability that no two people sit next to each other for both meals? This question can be phrased naturally in terms of graph theory: if we independently choose two Hamiltonian cycles in $K_n$ uniformly at random, what is the probability that they are disjoint? Aspvall and Liang showed that this probability approaches $1/e^2$ as $n$ goes to infinity \cite{aspvall1980diner}. The size of the intersections of other types of random subgraphs were studied in \cite{bender1999}.
\\
\\
In 1986, Janson studied the distribution of the number of edges in the union of random trees. Let $t(G)$ denote the set of spanning subtrees in a graph $G$. Let $Po(\lambda)$ denote the Poisson distribution with parameter $\lambda$, whose distribution is given by
\[
\mathbb{P}[Po(\lambda) = t]=e^{-\lambda}\frac{\lambda^t}{t!}.
\]\begin{theorem}[{\cite[Theorem 3]{Janson1986}}]Fix a positive integer $k$. Let $T_1,\ldots,T_k$ be chosen independently, uniformly at random from $t(K_n)$. Define $M_n=k(n-1)-|\cup_i E(T_i)|$. Then
\[
M_n\rightarrow Po(k(k-1)),
\]where the convergence is in distribution.
\end{theorem}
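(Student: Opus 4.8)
The plan is to discard a rare event, reduce to a cleaner statistic, import an exact spanning-tree enumeration identity as the electrical-network input, and then run the method of factorial moments. For an edge $e$ of $K_n$ let $N_e$ be the number of the trees $T_1,\dots,T_k$ containing $e$. Since $k(n-1)=\sum_e N_e$ and $|\bigcup_i T_i|=\sum_e\mathbbm{1}[N_e\ge 1]$, we have $M_n=\sum_e(N_e-1)^+$. Put $W:=\sum_{1\le i<j\le k}|T_i\cap T_j|=\sum_e\binom{N_e}{2}$. Because $(N_e-1)^+=\binom{N_e}{2}$ whenever $N_e\le 2$, we have $M_n=W$ on the event that no edge lies in three or more of the trees, and the complementary event has probability at most $\binom{n}{2}\binom{k}{3}(2/n)^3=O(k^3/n)$ by a union bound over the choice of an edge and of three trees (using that a uniform spanning tree of $K_n$ contains a fixed edge with probability $2/n$ — the effective resistance between its endpoints — and that the $T_i$ are independent). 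So it suffices to prove $W\to\mathrm{Po}(k(k-1))$ with an explicit rate and then absorb this error.

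The electrical-network input is the exact identity, equivalent to the matrix--tree (transfer-current) theorem and to computing effective resistances in $K_n$: for any forest $F$ in $K_n$ with $m$ edges whose connected components (isolated vertices included) have sizes $a_1,\dots,a_c$, a uniform random spanning tree $T$ of $K_n$ satisfies
\[
\mathbb{P}(F\subseteq T)=n^{-m}\prod_{i=1}^{c}a_i.
\]
In particular $\mathbb{P}(e\in T)=2/n$ for every edge, $\mathbb{P}(F\subseteq T)\le(2/n)^m$ with equality exactly when $F$ is a matching, and $\mathbb{P}(F\subseteq T)=0$ once $F$ contains a cycle.

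Now write $W=\sum_{(e,i,j)}X_{e,i,j}$ with $X_{e,i,j}=\mathbbm{1}[e\in T_i\cap T_j]$, and fix $r$. Expand $\mathbb{E}[(W)_r]$ as a sum of $\mathbb{E}[X_{\alpha_1}\cdots X_{\alpha_r}]$ over ordered $r$-tuples of distinct triples; by independence of the trees each term factors as $\prod_{l=1}^{k}\mathbb{P}(F_l\subseteq T_l)$, where $F_l$ collects the edges of those $\alpha_t$ whose pair contains $l$, each factor being given by the identity above. Sort the $r$-tuples by combinatorial type (which pairs occur, which edges coincide, which pairs of distinct edges share a vertex, whether a cycle appears). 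The dominant type — the $r$ edges all distinct and pairwise vertex-disjoint, pairs arbitrary — makes every $F_l$ a matching with $\sum_l|F_l|=2r$, so it contributes $\binom{k}{2}^{r}\binom{n}{2}^{r}(2/n)^{2r}$ up to a relative error $O_r(1/n)$; since $\binom{n}{2}^{r}(2/n)^{2r}=2^{r}+O_r(1/n)$ this equals $(k(k-1))^{r}+O_{k,r}(1/n)$. Every other type is degenerate and, by the identity, loses at least one power of $n$: a coincidence among the edges forces the repeated edge into at least three of the trees, costing extra factors $1/n$ that outweigh the fewer remaining edge choices; a pair of distinct edges sharing a vertex removes $\sim n$ edge choices while forcing a component of size $\ge 3$; and any cycle gives probability $0$. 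Hence the degenerate types contribute $O_{k,r}(1/n)$ in total (implied constant polynomial in $k$ for fixed $r$), so $\mathbb{E}[(W)_r]=(k(k-1))^{r}+O_{k,r}(1/n)$. Since the Poisson law is determined by its factorial moments, $W\to\mathrm{Po}(k(k-1))$; to make this quantitative one matches only the first $O(k(k-1))$ factorial moments of $W$ with those of $\mathrm{Po}(k(k-1))$ and combines this with the crude tail bound for $W$ from Markov's inequality applied to $(W)_2$, obtaining an explicit bound on $d_{\mathrm{TV}}(W,\mathrm{Po}(k(k-1)))$ and, after adding the reduction error, on $d_{\mathrm{TV}}(M_n,\mathrm{Po}(k(k-1)))$.

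The main obstacle is the uniform-in-$r$ bookkeeping of this expansion: one must enumerate the finitely many types of $r$-tuples and verify that each non-matching type really loses a power of $n$, which is precisely where the exact shape $\mathbb{P}(F\subseteq T)=n^{-m}\prod a_i$ is indispensable (it pins down the $3$-versus-$4$ at a length-two path and the vanishing probability at a cycle). A secondary difficulty is the passage from factorial-moment control to a total-variation rate: the full Bonferroni/inclusion--exclusion series would require controlling how fast the number of types grows with $r$, so it is cleaner to use only boundedly many moments together with a tail bound, as above.
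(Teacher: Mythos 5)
Your proposal is correct and gives a genuinely different proof from the one in the paper. The paper proceeds by an algorithmic enumeration: for a given partition $\ell_2+\dots+\ell_k=a$ it builds a $k$-tuple by choosing $T_1$, then iteratively selecting $\ell_i$ edges from the union $T_1\cup\dots\cup T_{i-1}$ and completing to a tree; the probability-of-edge-in-tree estimates come from Kirchhoff's theorem combined with Rayleigh monotonicity (Theorems \ref{kirch} and \ref{prod}) and Grimmett's bound on the number of spanning trees, which are robust enough to apply in proper subgraphs of $K_n$. This yields the pointwise bounds of Claims \ref{claim 1}--\ref{claim 2}, and hence a total-variation rate and the extension to $k=O(n^\alpha)$. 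Your proposal instead reduces $M_n$ to the pairwise-intersection statistic $W=\sum_e\binom{N_e}{2}$ after discarding the $O(k^3/n)$-probability event that some edge is hit three or more times, and then runs the method of factorial moments using the \emph{exact} generalized-Cayley identity $\mathbb{P}(F\subseteq T)=n^{-m}\prod a_i$. Your reduction and the identity are correct, and for fixed $k$ the factorial-moment computation closes the argument cleanly; the combinatorial bookkeeping of degenerate types (repeated edges, edges sharing a vertex, cycles) is exactly the kind of argument one would expect and is sound. The trade-offs are that your approach is crisper for $K_n$ and is likely closer to Janson's original 1986 proof, while the paper's electrical-network approach does not rely on the exact Cayley-type formula and therefore transfers to random spanning trees of general graphs (as the paper notes), and more directly yields explicit total-variation rates without the tail-plus-truncated-moments patch you describe.

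One small caution: for the quantitative version you sketch, passing from finitely many factorial moments plus a crude tail bound to a total-variation rate is more delicate than it may appear; the paper sidesteps this by obtaining direct pointwise bounds $\mathbb{P}[M_n=a]\le(1+o(1))\,\mathbb{P}[\mathrm{Po}(k(k-1))=a]$ for small $a$ together with the unconditional bound $(k(k-1))^a/a!$ for large $a$, which immediately gives the TV estimate. For the qualitative statement as quoted, your argument is complete.
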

In this paper we extend this results to allow $k$ to grow with $n$. 
\begin{theorem}\label{main thm 1}Let $\alpha \in (0, 1/9)$ be a fixed constant, and let $k=O(n^\alpha)$ be a positive integer. Let $T_1,\ldots,T_k$ be chosen independently, uniformly at random from $t(K_n)$.  Define $M_n:=k(n-1)-|\cup_i E(T_i)|$. Then, 
\[
\sum_{a=0}^{\infty} \left \vert \mathbb{P}[M_n = a] - \mathbb{P}[Po(k(k-1)) = a] \right \vert =o(1).
\]
In particular, if $k$ is a constant independent of $n$, then the total variation distance between $M_n$ and $Po(k(k-1))$ goes $0$. 
\end{theorem}
We have not attempted to maximize the value of $\alpha$ because our methods do not allow $k$ to grow linearly in $n$. In Section 4 we will show how allowing $k$ to grow with $n$ allows us to derive new upper tail estimates for the number of spanning trees in $G(n,m)$ and $G(n,p)$ because Theorem \ref{main thm 1} gives an upper bound on the moments of the number of trees in $G(n,m)$ and $G(n,p)$. It is also worth noting that the method used to prove Theorem \ref{main thm 1} is very different from the one used by Janson. Janson's method proceeded by comparing with a dissassociated set of random variables and uses the moments method. Our method can be easily modified to the case where the trees are drawn from $t(G)$, where $G$ is not the complete graph. 
\\
\\
Lastly, as an application of the upper tail estimates, we will show that the number of spanning trees satisfies a version of the law of the iterated logarithm (LIL). In order to state the result, we first recall a bit of history behind the problem. One of the most important results in probability theory is the central limit theorem (CLT), which states that if $x_1,x_2,\ldots $ is a sequence of independent identically distributed (iid) random variables with mean zero and unit variance, then
\[
\frac{S_n}{\sqrt{n}}\rightarrow N(0,1),
\]where  $S_n:=\sum_{i=1}^n x_i$ and $N(0,1)$ denotes the standard Gaussian. \\
\\
Khinchin \cite{khinchin} and independently Kolmogorov \cite{kolmog}, showed that under the same conditions one has
$$ \mathbb{P}\left[ \limsup_{n\rightarrow \infty} \frac{ S_n }{\sqrt{n} \sqrt {2 \log \log n }} =1 \right] =1, $$
which has been referred to as the Law of the Iterated Logarithm. 
\\
\\
There has been much work to extend CLT to the case where one allows dependence among $x_i$. In particular, it has been studied for the graph count case. Let $\tilde{\mathcal{A}}$ be a set of unlabeled graphs on at most $n$ vertices, and denote by $\mathcal{A}$ the set of copies of $\tilde{\mathcal{A}}$ in $K_n$. Then, we can define:
\[
X_n = \sum_{H\in \mathcal{A}} \mathbb{I}_{H\in G}
\]where $\mathbb{I}_{H\in G}$ is the indicator random variable for the event $H\in G$, and $G$ is some random graph (it can be sampled from $G(n,m)$, $G(n,p)$, or any other random graph model). Then $X_n$ is precisely the number the of copies of $\tilde{\mathcal{A}}$ in some random graph. For example, if we let $\tilde{\mathcal{A}}=\{3-$cycle$\}$, then $X_n$ is precisely the number of triangles in a random graph $G$. 
\\
\\
Many papers have studied graph counts. In particular, Ruci{\'n}ski found necessary and sufficient conditions for the number of copies of a fixed graph to be normally distributed \cite{rucinski1988small}. 
For larger graphs, Riordan found probabilities $p$ that are close to the best possible for a cube and square lattice to appear in a $G(n, p)$ with probability tending to $1$ \cite{riordan}.
Janson showed in \cite{janson} that if we let $G\sim G(n,m)$, then the (normalized) number of Hamilton cycles, spanning trees, and perfect matchings tend towards the standard normal distribution \cite[Theorem 2]{janson}. However, if $G\sim G(n,p)$, then this is not the case.
\begin{theorem}[{\cite[Theorem 4]{janson}}] Fix a constant $p < 1$. Let $X_n$ be the random variable that counts number of spanning trees, perfect matchings, or Hamilton cycles in $G(n,p)$. Let $p(n)\rightarrow p$. If $\liminf n^{1/2} p(n)>0$, then 
$$
p(n)^{1/2}\left(\log X_n-\log\mathbb{E}X_n+\frac{1-p(n)}{c p(n)}\right)\rightarrow N\left(0,\frac{2(1-p)}{c}\right)
$$where $c=1$ in the case of spanning trees and Hamilton cycles, and $c=4$ in the case of perfect matchings. 
\end{theorem}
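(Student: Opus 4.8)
The plan is to prove the result by conditioning on the number of edges $m=|E(G)|$, which splits $\log X_n$ into an explicit smooth function of $m$ --- the source of the Gaussian --- and a fluctuation that is negligible because $G(n,m)$ has a \emph{concentrated} number of spanning trees. I describe the spanning-tree case ($c=1$) and indicate the cosmetic changes for Hamilton cycles and perfect matchings at the end. Write $N=\binom n2$ and recall that $G(n,p)$ may be sampled by first drawing $m\sim\mathrm{Bin}(N,p)$ and then $G\sim G(n,m)$; let $Y_{n,m}$ be the number of spanning trees of $G(n,m)$, so $X_n$ has the law of $Y_{n,m}$ with $m$ random. By Cayley's formula and the fact that a fixed spanning tree survives in $G(n,m)$ with probability $\binom{N-(n-1)}{m-(n-1)}/\binom Nm$, set
\[
g_n(m):=\log\mathbb{E}[\,Y_{n,m}\mid m\,]=(n-2)\log n+\sum_{i=0}^{n-2}\log\frac{m-i}{N-i},\qquad \log X_n=g_n(m)+R_{n,m}.
\]

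Next I extract the Gaussian from $g_n(m)$ by Taylor expansion about $m_0:=Np$. Using $\log\frac{m_0-i}{N-i}=\log p-\frac{i(1-p)}{m_0}+O(i^2/m_0^2)$ one gets $g_n(m_0)=\log\mathbb{E}X_n-\frac{1-p}{p}+o(1)$ (since $\mathbb{E}X_n=n^{n-2}p^{n-1}$, $\sum_{i\le n}i\approx n^2/2$ and $m_0\approx n^2p/2$), together with $g_n'(m_0)=\sum_{i=0}^{n-2}\frac1{m_0-i}=\frac2{np}(1+o(1))$ and $g_n''=O(n^{-3})$ uniformly on the window $|m-m_0|=O_{\mathbb{P}}(\sqrt{Np(1-p)})=O_{\mathbb{P}}(n)$. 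Hence the quadratic Taylor remainder is $O_{\mathbb{P}}(n^{-1})=o_{\mathbb{P}}(1)$ and
\[
\log X_n-\log\mathbb{E}X_n+\frac{1-p}{p}=g_n'(m_0)(m-m_0)+R_{n,m}+o_{\mathbb{P}}(1).
\]
By the central limit theorem for binomials, $g_n'(m_0)(m-m_0)=\sqrt{\tfrac{2(1-p)}{p}}\cdot\frac{m-m_0}{\sqrt{Np(1-p)}}\,(1+o(1))\to N(0,\tfrac{2(1-p)}{p})$, so once $R_{n,m}\to0$ in probability is established, Slutsky and multiplication by $p^{1/2}$ give the stated limit $N(0,2(1-p))$; the hypothesis $\liminf n^{1/2}p(n)>0$ is exactly what controls the Taylor remainders and the window in the $p(n)\to p$ version.

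The main obstacle is showing $R_{n,m}=\log Y_{n,m}-\log\mathbb{E}[Y_{n,m}\mid m]\to0$ in probability, equivalently $Y_{n,m}/\mathbb{E}[Y_{n,m}\mid m]\to1$, for which it suffices by Chebyshev to prove $\mathbb{E}[Y_{n,m}^2\mid m]=(1+o(1))\,\mathbb{E}[Y_{n,m}\mid m]^2$ for $m$ in an $O(n)$ window of $m_0$. Expanding over ordered pairs of spanning trees, $\mathbb{E}[Y_{n,m}^2\mid m]=\sum_{T_1,T_2}\binom{N-a}{m-a}/\binom Nm$ with $a=|T_1\cup T_2|=2(n-1)-M$, where $M$ is exactly the union statistic of Theorem~\ref{main thm 1} for $k=2$. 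The key point --- and the reason $G(n,m)$ behaves differently from $G(n,p)$ --- is that $\binom{N-a}{m-a}/\binom Nm=p^{a}\exp\!\big(-\tfrac{a^2(1-p)}{2m}+o(1)\big)$, so the falling-factorial correction attached to $a\approx2n$, against the square of the single-tree correction at $a\approx n$, contributes an extra factor $\exp(-2(1-p)/p)$ that exactly cancels the inflation $\mathbb{E}[p^{-M}]\to e^{2(1-p)/p}$ coming from Theorem~\ref{main thm 1}, leaving the ratio $1+o(1)$. Making this rigorous is where the work is: one needs the quantitative Poisson approximation of Theorem~\ref{main thm 1} plus a tail estimate on $M$ strong enough to control $\mathbb{E}[p^{-M}]$ --- the same ingredient behind the upper-tail bounds of section~4. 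For Hamilton cycles one replaces $n-1$ by $n$ throughout (the count $(n-1)!/2$ and the analogous union statistic --- classical; cf.\ Aspvall--Liang for two random Hamilton cycles --- change nothing at leading order, so again $c=1$); for perfect matchings one replaces $n-1$ by $n/2$, which halves $g_n'(m_0)$ and the drift, giving variance $\tfrac{2(1-p)}{4p}$ and shift $\tfrac{1-p}{4p}$, i.e.\ $c=4$.

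As a check --- and a reason not to attempt a bare method of moments --- Theorem~\ref{main thm 1} gives directly $\mathbb{E}[X_n^k]=(\mathbb{E}X_n)^k\,\mathbb{E}[p^{-M_n}]\to(\mathbb{E}X_n)^k\,e^{(k^2-k)(1-p)/p}$ for fixed $k$, which is precisely $(\mathbb{E}X_n)^k$ times the $k$-th moment of the lognormal $\exp\!\big(N(-\tfrac{1-p}{p},\,\tfrac{2(1-p)}{p})\big)$ obtained above, so the two routes agree; but this moment sequence grows like $e^{\Theta(k^2)}$, fails Carleman's condition, and therefore does not on its own pin down the limiting law --- the edge-conditioning argument is what actually exhibits the Gaussian.
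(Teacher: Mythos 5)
The paper does not prove this statement; it cites it directly from Janson's 1994 paper, so there is no in-paper proof to compare against. That said, your route is in substance Janson's own: sample $G(n,p)$ by first drawing $m\sim\mathrm{Bin}(\binom n2,p)$ and then $G(n,m)$, split $\log X_n = g_n(m) + R_{n,m}$ into the explicit conditional-mean part and a fluctuation, extract the Gaussian from the binomial CLT via Taylor expansion of $g_n$, and show $R_{n,m}\to 0$ in probability. Your bookkeeping checks out: the shift $\sum_{i<n-1}\frac{i(1-p)}{m_0}\to\frac{1-p}{p}$, the slope $g_n'(m_0)\approx\frac{2}{np}$, the prefactor $\frac{2}{np}\sqrt{\binom n2 p(1-p)}\to\sqrt{2(1-p)/p}$, the $O(n^{-1})$ quadratic remainder on the $O(n)$ window, and the halved slope (hence $c=4$) for matchings are all right, and multiplying by $p^{1/2}$ gives $N(0,2(1-p))$ as claimed. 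Your closing observation that the moment sequence grows like $e^{\Theta(k^2)}$ and fails Carleman's condition is also correct, and it is a genuinely good reason a bare method of moments cannot produce the lognormal limit.

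The one substantive gap is exactly the step you flag: the concentration $Y_{n,m}/\mathbb{E}[Y_{n,m}\mid m]\to 1$ uniformly over the relevant $m$-window. Your outline of the second-moment computation is sound --- the $a\approx 2n$ falling-factorial correction $\exp(-a(a-1)(1-p_m)/2m)$ contributes an extra $e^{-2(1-p)/p}$ relative to the square of the first moment, cancelling $\mathbb{E}[p_m^{-M}]\to e^{2(1-p)/p}$ from the $k=2$ Poisson limit --- but as written this is a plan, not a proof. To close it you would need: a uniform-integrability or tail argument to pass from $M\Rightarrow\mathrm{Po}(2)$ to $\mathbb{E}[p_m^{-M}]\to e^{2(1/p-1)}$ (Claim~\ref{claim 1} gives the needed domination for fixed $p$); uniformity over $m$ in the window; and the sparse regime $p(n)\to 0$, where $\liminf n^{1/2}p(n)>0$ is not merely a Taylor-remainder technicality but is needed for the falling-factorial asymptotic ($\ell=o(m^{2/3})$) and for the remainder $1/(np(n))$ to vanish. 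Janson handles this concentration as a separate theorem (his CLT for $G(n,m)$, Theorem~2), and the quantitative version you would build from Theorem~\ref{main thm 1} is only proved in this paper for fixed $p$, so the $p(n)\to 0$ case would need its own argument.
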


Although CLT has been widely studied, this is not the case for LIL. In \cite{2016arXiv160708865F}, Ferber, Montealegre, and Vu showed that LIL holds for the number of copies of a graph with fixed size \cite[Theorem 1.3]{2016arXiv160708865F}. Moreover, they showed that a version of LIL holds for the case of Hamilton cycles \cite[Theorem 1.4]{2016arXiv160708865F}. In this paper we show a version of the LIL for  spanning trees.
\begin{theorem}\label{maintheorem3}
Let $0<p<1$ be a constant. Let $X_n$ be the number of spanning trees in $G(n,p)$, coupled by forming $G(\mathbb{N}, p)$ and then restricting to $[n]$. Then, \begin{equation}
\mathbb{P}\left[\limsup_{n \to \infty}\frac{\log X_n -\mu_n}{\sigma \sqrt{2 \log \log n}} = 1\right] = 1,
\end{equation}
where $\mu_n = \log(p^{n-1} n^{n-2})$ and $\sigma = \sqrt{\frac{2 (1-p)}{p}}$.
\end{theorem}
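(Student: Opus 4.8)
The plan is to follow the standard two-sided architecture for a law of the iterated logarithm, writing $Z_n:=\log X_n-\mu_n$ (note $\mu_n=\log\mathbb{E}X_n$ by Cayley's formula) and $\mathcal{F}_n$ for the $\sigma$-algebra generated by the edges of $G(\mathbb{N},p)$ inside $[n]$, so that $(Z_n)$ is adapted to $(\mathcal{F}_n)$. The statement splits into an upper bound on the $\limsup$ (a first--Borel--Cantelli argument along a geometric subsequence, plus control of the oscillation of $Z_n$ between consecutive subsequence points) and a matching lower bound (a second--Borel--Cantelli / Kochen--Stone argument exploiting near-independence of the contributions of widely separated ``scales''). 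The constant $\sigma^2=2(1-p)/p$ will appear as the limiting variance of $Z_n$; the $O(1)$ discrepancy between $\mu_n$ and $\mathbb{E}[\log X_n]=\mu_n-\sigma^2/2+o(1)$ is irrelevant at the scale $\sqrt{\log\log n}$.

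The first ingredient is an exponential upper tail for $Z_n$, obtained from Theorem~\ref{main thm 1}. Expanding, $\mathbb{E}[X_n^k]=\sum_{T_1,\dots,T_k\in\tau(K_n)}p^{|\cup_iT_i|}=n^{k(n-2)}p^{k(n-1)}\,\mathbb{E}[p^{-M_n}]$ where $T_1,\dots,T_k$ are uniform on $\tau(K_n)$ and $M_n=k(n-1)-|\cup_iT_i|$. Since $0\le M_n\le(k-1)(n-1)$, the explicit rate of convergence in Theorem~\ref{main thm 1} (which in particular controls the upper tail of $M_n$) gives $\mathbb{E}[p^{-M_n}]=(1+o(1))\,e^{k(k-1)(1-p)/p}$, hence
\[
\mathbb{E}\!\left[(X_n/\mathbb{E}X_n)^k\right]\le e^{(1+o(1))\,\sigma^2k^2/2},
\]
uniformly for $k=O(n^\alpha)$ (using $\sigma^2/2=(1-p)/p$); this also identifies $\sigma$ independently of Janson's central limit theorem, which it matches. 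Applying Markov's inequality to the $k$-th moment and optimizing in $k$ (the optimal $k\approx t/\sigma^2$ is $O(\sqrt{\log\log n})=o(n^\alpha)$ in the relevant range) yields
\[
\mathbb{P}[Z_n\ge t]\le\exp\!\Big(-(1+o(1))\,\tfrac{t^2}{2\sigma^2}\Big)\qquad\text{for all }t=O(\sqrt{\log\log n}).
\]

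For the upper bound on the $\limsup$, fix $\varepsilon>0$ and $\rho>1$, set $n_j=\lceil\rho^j\rceil$, and apply the tail bound at $t=(1+\varepsilon)\sigma\sqrt{2\log\log n_j}$: since then $t^2/(2\sigma^2)=(1+\varepsilon)^2\log\log n_j$, the probabilities $\mathbb{P}[Z_{n_j}\ge t]\le(\log n_j)^{-(1+\varepsilon)^2(1+o(1))}$ are summable in $j$, so Borel--Cantelli gives $\limsup_jZ_{n_j}/(\sigma\sqrt{2\log\log n_j})\le1+\varepsilon$ a.s. To pass from the subsequence to the full sequence I need a summable bound on $\mathbb{P}\big[\max_{n_{j-1}\le n\le n_j}|Z_n-Z_{n_{j-1}}|>\delta(\rho)\sqrt{\log\log n_j}\big]$ with $\delta(\rho)\to0$ as $\rho\to1$; deterministic bounds are useless here (a priori $Z_n-Z_{n_{j-1}}$ can be of order $n_{j-1}$), so this must come from the \emph{variance} of the bridge being $O(\rho-1)$ together with a sub-Gaussian maximal inequality. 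This rests on the dependence structure of $(Z_n)$: writing $Z_n=\sum_{m\le n}Y_m$ with $Y_m=\log X_m-\log X_{m-1}-(\mu_m-\mu_{m-1})$ and using $\log X_m=\log\det\!\big(L_{m-1}+\operatorname{diag}(\mathbb{I}_{S_m})\big)$ (where $L_{m-1}$ is the Laplacian of $G_{m-1}$ and $S_m$ the neighbourhood of the new vertex $m$), the concentration of the nonzero Laplacian eigenvalues of $G(m-1,p)$ near $pm$ lets one expand $\log\det$ perturbatively and show that $Y_m$ is, up to a small error, a function of the fresh edges $S_m$ alone, with conditional variance of order $1/m$, \emph{plus} a term that reverts the process toward its mean at rate $\approx1/m$; with the constants arranged, $(Z_n)$ then behaves like a stationary Ornstein--Uhlenbeck process in the time variable $t=\log n$ with stationary variance $\sigma^2$ (which is also why $\operatorname{Var}(Z_n)$ stays bounded, as in Janson's CLT). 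Granting this, the bridge over $[n_{j-1},n_j]$ has variance $\asymp\log\rho$, the oscillation bound follows from Freedman's martingale inequality applied to the bridge, and letting $\rho\downarrow1$, $\varepsilon\downarrow0$ gives $\limsup_nZ_n/(\sigma\sqrt{2\log\log n})\le1$ a.s.

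For the lower bound $\limsup\ge1$, keep $n_j=\lceil\rho^j\rceil$ and consider the ``fresh part'' of $Z_{n_j}$ — its component carried by edges incident to $\{n_{j-1}+1,\dots,n_j\}$. By a central limit theorem for the sum of the many small, weakly dependent increments $Y_m$ with $n_{j-1}<m\le n_j$, this fresh part is approximately $N(0,(1-o(1))\sigma^2)$, and by the ``function of fresh edges only, up to error'' structure it is approximately independent across $j$. A moderate-deviation lower bound (Cramér) gives $\mathbb{P}[\,\text{fresh}_j>(1-\varepsilon)\sigma\sqrt{2\log\log n_j}\,]\ge(\log n_j)^{-(1-\varepsilon)^2(1+o(1))}$, whose sum over $j$ diverges; a Kochen--Stone argument (using the near-independence) then forces this event to occur for infinitely many $j$ a.s., and since $Z_{n_{j-1}}=O(1)$ with high probability and enters $Z_{n_j}$ only through the vanishing mean-reversion factor, one gets $Z_{n_j}>(1-2\varepsilon)\sigma\sqrt{2\log\log n_j}$ infinitely often a.s.; letting $\varepsilon\downarrow0$ completes the proof. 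The genuinely delicate point in both directions is this structural analysis: Theorem~\ref{main thm 1} supplies only the one-sided upper tail, whereas the oscillation control and the decoupling both require (a) spectral concentration of the $G(n,p)$ Laplacian together with a quantitative perturbative expansion of $\log\det$, and (b) a matching Gaussian lower bound on the moderate deviations of $Z_n$ — and this is precisely the component that has to be transferred with care from the Hamilton-cycle treatment of Ferber--Montealegre--Vu.
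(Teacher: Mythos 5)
Your architecture is the classical two-sided LIL scheme, but you are effectively trying to re-derive an LIL for $Z_n=\log X_n-\mu_n$ from scratch, and the hardest steps are left as conjectures. Concretely: (i) the Ornstein--Uhlenbeck-type structural analysis of $(Z_n)$ --- that the increment $Y_m$ obtained from a perturbative expansion of $\log\det\bigl(L_{m-1}+\operatorname{diag}(\mathbb{I}_{S_m})\bigr)$ is essentially a function of the fresh edges with conditional variance $\asymp 1/m$ plus the stated mean-reversion --- is asserted, not proved, and is by far the most delicate ingredient in your whole program; it is what your bridge/oscillation control for passing from the subsequence $\{n_j\}$ to all $n$ rests on. (ii) Theorem~\ref{main thm 1} and the moment bounds it yields control only the \emph{upper} tail of $Z_n$, so the moderate-deviation \emph{lower} bound $\mathbb{P}[\text{fresh}_j>(1-\varepsilon)\sigma\sqrt{2\log\log n_j}]\ge(\log n_j)^{-(1-\varepsilon)^2(1+o(1))}$ required for $\limsup\ge 1$ does not follow from anything you have established. (iii) The near-independence across scales needed for the Kochen--Stone step is likewise unjustified. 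You flag (ii) and (iii) yourself, but deferring them to a ``transfer with care'' from Ferber--Montealegre--Vu leaves a genuine gap, because the quantitative lower-tail and decoupling inputs are not supplied.

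The paper's proof avoids all of this by a far more direct coupling: it shows that $\log X_n$ is pinned, up to an $O(1)$ error, to the edge count $E_n$ of $G(n,p)$, which is binomial and hence satisfies the classical LIL. For the upper bound, Lemma~\ref{XKBound} (obtained from Theorem~\ref{main thm 1} with $k=\Theta(\log n)$) gives $\mathbb{P}[X_{n,m}>K\mathbb{E}X_{n,m}]\le n^{-4}$; conditioning on $E_n=m$ and Taylor-expanding $\log\mathbb{E}X_{n,m}$ in $m$ yields $(\log X_n-\mu_n)/\sigma\le E_n^*+O(1)$ with probability $1-O(n^{-2})$. Since this comparison holds for \emph{all} large $n$ after Borel--Cantelli, no bridge oscillation control is needed; the LIL for $E_n^*$ does the rest. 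For the lower bound, the paper invokes Janson's Theorem~6, which supplies exactly the complementary two-sided control $\mathbb{P}[E_n^*-(\log X_n-\mu_n)/\sigma>C]=O(1/n)$; summing along $n_k=a^k$ and Borel--Cantelli transfer the LIL lower bound for $E_{n_k}^*$ directly to $\log X_{n_k}$, with no moderate-deviation lower bound, no second Borel--Cantelli/Kochen--Stone, and no independence-across-scales argument required. If you want to salvage your route, the missing inputs are precisely a proof of the increment/OU structure and a two-sided moderate-deviation comparison with $E_n^*$; the paper shows that the latter is available off-the-shelf from Janson's CLT analysis, which renders the former unnecessary.
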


The organization of the paper is as follows. In Section 2, we present some notation and results that will be used throughout the paper. In Section 3, we prove Theorem \ref{main thm 1}. In Section 4, we derive new upper tail estimates for the number of spanning trees, which might be of independent interest. Section 5 contains the proof of Theorem \ref{maintheorem3}. Lastly, Section 6 contains some calculations which we have omitted in some of the earlier sections for sake of clarity.

\section{Background and notation}
Let $G(\mathbb{N},p)$ be the random graph on vertex set $\mathbb{N}$ where any two vertices are joined independently at random with probability $p$. Let $G(n,p)$ denote the subgraph induced by the first $n$ vertices. Throughout this paper, we will only consider the case where $p$ is a fixed constant. Let $G(n,m)$ be the random graph model on $n$ vertices formed by choosing a set of $m$ distinct edges uniformly at random. 
\\
\\
Let $X_n$ the number of spanning trees in $G(n,p)$, and let $X_{n,m}$ denote the number of trees in $G(n,m)$. By Cayley's formula, $N_T:=n^{n-2}$ will denote the number of spanning trees in $K_n$. 
\\
\\
We shall repeatedly use the following well-known theorem.

\begin{theorem}[Borel-Cantelli Lemma]\label{BorelCantelli} Let $(A_i)_{i=1}^{\infty}$ be a sequence of events. If $\sum_{i=1}^{\infty} \mathbb{P}[A_i] < \infty$, then \begin{equation*}
\mathbb{P}\left[A_i \text{ holds for infinitely many }i \right]=0
\end{equation*}
\end{theorem}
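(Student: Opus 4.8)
The plan is the standard measure-theoretic argument. First I would write down carefully what the event ``$A_i$ holds for infinitely many $i$'' is as a set operation: an outcome $\omega$ belongs to this event precisely when, for every $n$, there is some $i \ge n$ with $\omega \in A_i$. Hence
\[
\{A_i \text{ holds for infinitely many } i\} \;=\; \bigcap_{n=1}^{\infty} \bigcup_{i=n}^{\infty} A_i \;=:\; A,
\]
the $\limsup$ of the sequence $(A_i)$. Call $B_n := \bigcup_{i=n}^{\infty} A_i$, so that $A = \bigcap_{n} B_n$ and the sets $B_n$ are nested decreasing.

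Next I would bound $\mathbb{P}[A]$. Since $A \subseteq B_n$ for every $n$, monotonicity of probability gives $\mathbb{P}[A] \le \mathbb{P}[B_n]$, and by countable subadditivity of the probability measure,
\[
\mathbb{P}[A] \;\le\; \mathbb{P}[B_n] \;=\; \mathbb{P}\!\left[\bigcup_{i=n}^{\infty} A_i\right] \;\le\; \sum_{i=n}^{\infty} \mathbb{P}[A_i].
\]
This holds for every $n \in \mathbb{N}$.

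Finally I would take $n \to \infty$. By hypothesis $\sum_{i=1}^{\infty} \mathbb{P}[A_i] < \infty$, so the tail sums $\sum_{i=n}^{\infty} \mathbb{P}[A_i]$ form the tail of a convergent series and therefore tend to $0$ as $n \to \infty$. Since $\mathbb{P}[A]$ is a fixed number bounded above by each of these tails, we conclude $\mathbb{P}[A] = 0$, which is exactly the assertion that $A_i$ holds for infinitely many $i$ with probability zero.

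There is essentially no serious obstacle here; the only points requiring care are (i) correctly translating ``infinitely many $i$'' into the nested intersection $\bigcap_n \bigcup_{i \ge n} A_i$, and (ii) invoking countable subadditivity (the union bound over the infinite tail $\bigcup_{i \ge n} A_i$) rather than only finite subadditivity. Both are routine, so the argument is short.
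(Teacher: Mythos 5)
Your proof is correct: the translation of ``infinitely many $i$'' into $\bigcap_n \bigcup_{i\ge n} A_i$, the bound via monotonicity and countable subadditivity, and the vanishing of the tail sums constitute the standard argument for the first Borel--Cantelli lemma. The paper itself states this as a well-known background result without proof, so there is nothing to compare against; your argument is the canonical one and is complete.
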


We will use several results from the theory of electrical networks in our proof. See \cite{LP} for a detailed introduction to the theory of electrical networks. For the sake of completeness, we will briefly summarize some basic theorems that will be used. An electrical network is a multigraph with weighted edges $R : E \to \mathbb{R}_{\ge 0}$, called resistances. Every graph produces an electrical network by assigning each edge a resistance of $1$.
The graphs we consider are not directed, but we can view them as reversible Markov chains with transition probabilities defined so that they are inversely proportional to resistances $p(x, y) = p(y, x) \sim 1/R_{xy}$ for each edge $xy$.
We will also consider a potential function, or voltage function $v: V \to \mathbb{R}_{\ge 0}$ on our graphs.
Voltage functions that we consider will fix values for two vertices, viewed as a source and sink. Outside of these distinguished vertices, the voltage function will be a harmonic function, which enforces at each vertex $x$ the averaging property 
\begin{equation*}
    v(x) = \sum_{xy \in E} p(x,y) v(y).
\end{equation*}
For a finite connected network, $v$ is completely determined by the harmonic property once the voltage is fixed for any two vertices. A voltage function defines a current function $i: E \mapsto \mathbb{R}$ that assigns a current, the amount of electricity flowing through a resistor, to a directed edge. This current is defined by the Ohm's law.

\begin{definition}[Ohm's Law] Let $ab$ be an edge in $H$. Let $v(a) - v(b) $ be the voltage difference across $ab$, and let $R_{ab}$ the resistance of $ab$. Then  \begin{equation*}
i(ab) = \frac{v(a) - v(b)}{R_{ab}}
\end{equation*}
\end{definition}

Once we have defined a source, sink, and voltage function, we can consider network reductions, which modify the electrical network but leave the voltages at certain vertices unchanged.
The two network reductions we will use are the series law and parallel law. We say two resistors are in series if they are arranged in a chain.
A property of resistors is that the resistance of resistors in circuits can be added together, so the two systems below are equivalent for the purposes of computing $v(a)$ and $v(b)$. 

\begin{center} \begin{circuitikz} \draw
(0,0) to[R=$R_1$, o-o] (3,0)
(3,0) to[R=$R_2$, o-o] (6,0)
;
\node [above] at (0,0) {$a$};
\draw[fill] (0,0) circle [radius=0.05];
\draw[fill] (6,0) circle [radius=0.05];
\node [above] at (6,0) {$b$};
\end{circuitikz}
\begin{circuitikz} \draw
(0,0) to[R=$R_1+R_2$, o-o] (6,0)
;
\node [above] at (0,0) {$a$};
\draw[fill] (0,0) circle [radius=0.05];
\draw[fill] (6,0) circle [radius=0.05];
\node [above] at (6,0) {$b$};
\end{circuitikz}\end{center}

Two resistors are in parallel if they both have the same endpoints. 
Resistors in parallel can be combined by taking the harmonic mean of the resistances.

\begin{center} \begin{circuitikz} \draw
(0,0) -- (1,0)
(1,0) -- (1,1)
(1,1) to[R=$R_1$, o-o] (4,1)
(4,1) -- (4,0)
(4,0) -- (5,0)
(1,0) -- (1,-1)
(1,-1) to[R=$R_2$, o-o] (4,-1)
(4,-1) -- (4,0)
;
\node [above] at (0,0) {$a$};
\draw[fill] (0,0) circle [radius=0.05];
\draw[fill] (5,0) circle [radius=0.05];
\node [above] at (5,0) {$b$};
\end{circuitikz}
\begin{circuitikz} \draw
(0,0) to[R= $\dfrac{1}{1/R_1 + 1/R_2}$, o-o] (6,0)
;
\node [above] at (0,0) {$a$};
\draw[fill] (0,0) circle [radius=0.05];
\draw[fill] (6,0) circle [radius=0.05];
\node [above] at (6,0) {$b$};
\end{circuitikz}
\end{center}

The following theorem, originally due to Kirchhoff (see $\cite[\text{p. 105}]{LP}$) establishes a connection between electrical networks and trees. Intuitively, the probability that a random spanning tree uses a given edge $ab$ depends on how many other paths there are from $a$ to $b$. If there are few other paths from $a$ to $b$, then the current from $a$ to $b$ will be large, and the probability that a random spanning tree contains $ab$ will also be large. This motivates the following theorem. 
\begin{theorem} \label{kirch} Let $ab$ be an edge in $H$ where $v(b)=0$ and $v(a)$ is such that $\sum_{ea \in E} i(ea) = 1$. Suppose we choose a spanning tree, $T$, uniformly at random from a graph $H$. Then
\begin{equation*} \mathbb{P}\left[ab \in T \right] = i(ab), \end{equation*} 
\end{theorem}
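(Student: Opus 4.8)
The plan is to compute $\mathbb{P}[ab \in T]$ directly from the Matrix-Tree theorem and identify the resulting ratio of spanning-tree counts with the current $i(ab)$. First I would normalize the setup: set $v(b) = 0$ and choose $v(a)$ so that a unit current is injected at $a$ and extracted at $b$, i.e.\ $\sum_{e \sim a} i(e) = 1$. By Ohm's law and Kirchhoff's node law, the voltage function $v$ on the remaining vertices is the unique solution of the discrete Laplace equation with these boundary conditions, so $v$ is determined and hence so is $i(ab) = (v(a)-v(b))/R_{ab} = v(a)/R_{ab}$. The quantity $v(a)$ is, by definition, the effective resistance $R_{\mathrm{eff}}(a,b)$ times the injected unit current, so $i(ab) = R_{\mathrm{eff}}(a,b)/R_{ab}$; the goal becomes showing $\mathbb{P}[ab \in T] = R_{\mathrm{eff}}(a,b)/R_{ab}$.

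Next I would invoke the weighted Matrix-Tree theorem: if each edge $e$ carries conductance $c_e = 1/R_e$, then the weighted spanning-tree polynomial $Z(H) := \sum_{T} \prod_{e \in T} c_e$ equals any cofactor of the weighted Laplacian $L$, where $L_{xx} = \sum_{e \sim x} c_e$ and $L_{xy} = -c_{xy}$. Writing $Z(H \mid ab \in T) := \sum_{T \ni ab} \prod_{e\in T} c_e = c_{ab}\, Z(H/ab)$ for the contracted graph $H/ab$, the desired probability in the uniform (all resistances equal, so all conductances equal) case is $Z(H\mid ab\in T)/Z(H)$; the weights all cancel and this is exactly the fraction of spanning trees using $ab$. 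So I must show
\[
\frac{c_{ab}\, Z(H/ab)}{Z(H)} \;=\; \frac{R_{\mathrm{eff}}(a,b)}{R_{ab}} \;=\; c_{ab}\, R_{\mathrm{eff}}(a,b),
\]
i.e.\ $R_{\mathrm{eff}}(a,b) = Z(H/ab)/Z(H)$. This last identity is the crux: by Cramer's rule applied to the reduced Laplacian (delete the row/column at $b$), the entry of $L^{-1}$ governing the potential at $a$ due to a unit source at $a$ is a ratio of determinants, the numerator being the cofactor obtained by further deleting the row/column at $a$ — which by Matrix-Tree is $Z(H/ab)$ once one checks that deleting both $a$ and $b$ from $L$ yields the Laplacian of $H/ab$ up to the edge $ab$ itself. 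Assembling these gives $R_{\mathrm{eff}}(a,b) = Z(H/ab)/Z(H)$ and hence the theorem.

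The main obstacle is the bookkeeping in the final determinant identity: one must carefully track how contracting the edge $ab$ interacts with deleting the rows and columns of $b$ and of $a$ in the Laplacian, and confirm that self-loops created by contraction (which contribute nothing to spanning trees) are correctly discarded, so that the two-vertex-deleted minor of $L(H)$ really is (a cofactor of) $L(H/ab)$. A clean way to sidestep some of this is to argue probabilistically instead: condition on whether $ab \in T$, use the fact that a uniform spanning tree conditioned to contain $ab$ is a uniform spanning tree of $H/ab$ while one conditioned to exclude $ab$ is a uniform spanning tree of $H \setminus ab$, and then express $i(ab)$ via the series/parallel reduction already introduced in the excerpt together with the uniqueness of the harmonic voltage function. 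I would present the Matrix-Tree computation as the main line and remark on the probabilistic alternative.
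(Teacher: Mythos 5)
The paper does not prove Theorem~\ref{kirch}; it is invoked as a classical result of Kirchhoff with a citation to \cite[p.~105]{LP}, so there is no in-paper argument to compare against. Your Matrix-Tree derivation is the standard route and is essentially complete. The three identities you isolate,
\[
\mathbb{P}[ab\in T]=\frac{c_{ab}\,Z(H/ab)}{Z(H)},\qquad R_{\mathrm{eff}}(a,b)=\frac{Z(H/ab)}{Z(H)},\qquad i(ab)=\frac{R_{\mathrm{eff}}(a,b)}{R_{ab}}=c_{ab}\,R_{\mathrm{eff}}(a,b),
\]
assemble correctly to give $\mathbb{P}[ab\in T]=i(ab)$, and in the uniform, unit-resistance setting this paper uses ($c_e\equiv 1$) the conductance factors cancel as you observe. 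The bookkeeping you flag is genuine but benign: for $x,y\notin\{a,b\}$ the Laplacian entries $L_{xy}$ and $L_{xx}$ are unchanged by contracting $ab$, so deleting rows and columns $a$ and $b$ from $L(H)$ yields exactly the matrix obtained by deleting the merged-vertex row and column from $L(H/ab)$; and the self-loop created by contracting $ab$ contributes zero to the Laplacian and belongs to no spanning tree, so it cannot disturb the Matrix-Tree count. With those checks written out, the Cramer's-rule step $v(a)=(L_b^{-1})_{aa}=Z(H/ab)/Z(H)$ closes the argument, and the probabilistic alternative you sketch is a legitimate but secondary remark.
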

We also have the following theorem, which says that the events that a random spanning tree contains a given edge are negatively correlated.
\begin{theorem}[{\cite[Theorem 4.5]{LP}}]\label{prod} Let $e_1, \dots, e_k$ be edges in $H$ and choose a spanning tree $T \subseteq H$ uniformly at random.  Then
\begin{equation*} 
\mathbb{P}\left[e_1, e_2, \dotsc, e_k \in T \right] \le \prod_{i=1}^k \mathbb{P}\left[e_i \in T \right].
\end{equation*}
\end{theorem}
Note also the following theorem.
\begin{theorem}[Rayleigh Monotonicity Law] Let $G$, $G'$ be graphs on the same vertex set with the same voltage fixed at the source and sink. Suppose that $G' \subseteq G$. Then for every edge $e \in G'$, \begin{equation*}
i_{G'}(e) \ge i_{G}(e).
\end{equation*}
\end{theorem}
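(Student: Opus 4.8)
The plan is to read the statement through Kirchhoff's theorem (Theorem~\ref{kirch}): for an edge $e = ab$ of a network $H$, the quantity $i_H(e)$ is the current through $e$ when a unit current is injected at $a$ and withdrawn at $b$, and this equals $\mathbb{P}_H[e \in T]$ for a uniformly random spanning tree $T$ of $H$. So the assertion $i_{G'}(e) \ge i_G(e)$ is exactly the statement that enlarging the ambient graph can only decrease the probability that a fixed edge lies in a random spanning tree --- equivalently, monotonicity of the edge current under the source--sink pair placed at the endpoints of $e$. (With a fixed source--sink pair unrelated to $e$ the inequality would fail, so this reading is forced.)

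First I would localize the dependence on the rest of the network. Fix $e = ab$ with resistance $r_e$, and let $R^*_H := R_{\mathrm{eff}}^{H \setminus e}(a,b)$ be the effective resistance between $a$ and $b$ in $H$ with the edge $e$ deleted. By the parallel law, the network $H$ viewed between $a$ and $b$ is the resistor $r_e$ in parallel with a single resistor of resistance $R^*_H$; injecting unit current therefore produces a voltage drop $v(a)-v(b) = \frac{r_e R^*_H}{r_e + R^*_H}$, and Ohm's law gives
\[
i_H(e) = \frac{v(a)-v(b)}{r_e} = \frac{R^*_H}{r_e + R^*_H} = \Bigl(1 + \frac{r_e}{R^*_H}\Bigr)^{-1},
\]
which is strictly increasing in $R^*_H$. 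Since $G' \subseteq G$ implies $G' \setminus e \subseteq G \setminus e$, it now suffices to prove that the effective resistance between two fixed vertices is non-decreasing under edge deletion, i.e. $R^*_{G'} \ge R^*_G$.

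The key step is this monotonicity of effective resistance, which I would obtain from Thomson's principle: in any finite network $H$,
\[
R_{\mathrm{eff}}^{H}(a,b) = \min_{\theta}\ \sum_{f \in E(H)} r_f\, \theta(f)^2,
\]
the minimum taken over all unit flows $\theta$ from $a$ to $b$ in $H$, attained by the electrical current. It is enough to treat the case where $G \setminus e$ is obtained from $G' \setminus e$ by adding a single edge, then iterate: any unit flow in $G' \setminus e$ extends to a unit flow in $G \setminus e$ with the same energy (assign flow $0$ to the new edge), so the minimum over the larger feasible set is no larger, giving $R^*_{G} \le R^*_{G'}$. Combining with the displayed formula for $i_H(e)$ yields $i_{G'}(e) \ge i_G(e)$.

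The only substantive ingredient is Thomson's principle (that the current is the energy-minimizing unit flow); if one prefers to avoid it, the same reduction runs through Dirichlet's principle on voltages, or --- staying combinatorial --- through the deletion--contraction identity $\mathbb{P}_H[e \in T] = t(H/e)\big/\bigl(t(H/e)+t(H\setminus e)\bigr)$ together with a correlation inequality controlling how adding edges affects $t(\cdot \setminus e)/t(\cdot / e)$; but the flow--energy argument is the cleanest, so I expect the routine-but-necessary verification of Thomson's principle to be the main point to pin down. The one small bookkeeping item is the degenerate case: if $G' \setminus e$ disconnects $a$ from $b$ then $R^*_{G'} = \infty$, so $i_{G'}(e) = 1$ and the inequality is trivial; otherwise the parallel-law picture above applies verbatim.
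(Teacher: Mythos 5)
The paper never proves this theorem: it states it, remarks (via Ohm's law) that it is equivalent to the assertion that adding a resistor cannot increase the total effective resistance, and refers to the combinatorial proof of Cibulka--Hladk\'y--LaCroix--Wagner \cite{Ray}. Your argument is therefore a genuinely different (and in fact the classical) route, and it is correct: you rightly observe that the statement must be read with unit current driven between the endpoints of $e$ (with a source--sink pair unrelated to $e$, individual edge currents are not monotone under adding edges), you reduce $i_H(e)$ to an effective resistance, and you get monotonicity of effective resistance from Thomson's principle, since any unit flow in the smaller graph extends by zero to the larger one (no single-edge iteration is even needed), with the bridge case handled separately. Two remarks. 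First, you can shorten the reduction: with unit current from $a$ to $b$, Ohm's law on the edge $e$ itself gives $i_H(e) = (v(a)-v(b))/r_e = R^{H}_{\mathrm{eff}}(a,b)/r_e$, so monotonicity of $i_H(e)$ is \emph{literally} the monotonicity of the total effective resistance (this is exactly the equivalence the paper states), and the detour through $H\setminus e$, the parallel-law formula, and the $R^{*}=\infty$ bookkeeping all disappear. Second, the one ingredient you leave unproved is Thomson's (equivalently Dirichlet's) principle, which you correctly flag; it is standard and short (any unit flow differs from the current flow by a circulation, and the energy splits orthogonally), but in a self-contained write-up it must be supplied, whereas the paper sidesteps everything by citation. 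What your approach buys is a short variational proof living entirely inside the electrical-network framework the paper already uses; what the cited reference buys is a purely combinatorial proof in terms of spanning-tree counts, with no variational principle at all.
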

This, together with Ohm's law, is equivalent to the assertion that adding a resistor to a network cannot decrease the current through any edge.  

We also use the following bound on the number of subtrees.
\begin{theorem}[\cite{Grimmett}]\label{grimmett} Let $H$ be a graph with $n$ vertices and $m$ edges. Then,
\begin{equation*}
|t(H)| \le \frac{1}{n} \left(\frac{2m}{n-1}\right)^{n-1}.
\end{equation*}
\end{theorem}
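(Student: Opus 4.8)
The plan is to combine the Matrix--Tree Theorem with the AM--GM inequality applied to the spectrum of the graph Laplacian. Let $L = D - A$ be the combinatorial Laplacian of $H$, where $D$ is the diagonal matrix of vertex degrees and $A$ is the adjacency matrix. Then $L$ is symmetric and positive semidefinite and annihilates the all-ones vector, so it has eigenvalues $0 = \lambda_0 \le \lambda_1 \le \cdots \le \lambda_{n-1}$. The Matrix--Tree Theorem says that every $(n-1)\times(n-1)$ principal minor of $L$ equals $|\tau(H)|$; comparing the coefficient of $x$ in $\det(xI - L) = x\prod_{i=1}^{n-1}(x-\lambda_i)$ with its expansion as an alternating sum of the $n$ principal minors of $L$ then yields the eigenvalue form
\[
|\tau(H)| = \frac{1}{n}\prod_{i=1}^{n-1}\lambda_i .
\]
I would take this identity as the starting point, either citing it or including the one-line characteristic-polynomial derivation just sketched.

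Next I would compute the trace: $\sum_{i=0}^{n-1}\lambda_i = \operatorname{tr}(L) = \sum_{v \in V(H)} \deg_H(v) = 2m$, and since $\lambda_0 = 0$ this gives $\sum_{i=1}^{n-1}\lambda_i = 2m$. Applying AM--GM to the $n-1$ nonnegative reals $\lambda_1, \dots, \lambda_{n-1}$,
\[
\prod_{i=1}^{n-1}\lambda_i \;\le\; \left(\frac{1}{n-1}\sum_{i=1}^{n-1}\lambda_i\right)^{n-1} \;=\; \left(\frac{2m}{n-1}\right)^{n-1},
\]
and substituting into the eigenvalue form of the Matrix--Tree Theorem gives exactly $|\tau(H)| \le \frac{1}{n}\left(\frac{2m}{n-1}\right)^{n-1}$. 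Finally I would dispose of degenerate cases: if $H$ is disconnected then $\lambda_1 = 0$, so $|\tau(H)| = 0$ and the bound is trivial; for $n \ge 2$ with $H$ connected one has $m \ge n-1 \ge 1$ and no interpretive issues arise (the case $n=1$ only requires the convention $0^0 = 1$, or can simply be excluded).

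There is no serious obstacle here: the entire argument is the two-line combination above, and the only real content is invoking the Matrix--Tree Theorem in its eigenvalue form. It is perhaps worth noting in a remark that one can avoid spectral graph theory altogether: rooting each spanning tree at a fixed vertex $r$ gives an injection $T \mapsto (\text{parent map})$ into $\prod_{v \ne r} N_H(v)$, hence $|\tau(H)| \le \prod_{v \ne r}\deg_H(v)$, and taking the geometric mean of this inequality over all roots $r$ together with AM--GM on the degree sequence produces a bound of the same shape. However, this elementary route loses a factor of order $n/e$ and does not recover the sharp constant $\tfrac1n\bigl(\tfrac{1}{n-1}\bigr)^{n-1}$ (which is attained when $H = K_n$, where the bound reduces to Cayley's formula $n^{n-2}$), so the spectral proof is the one I would present.
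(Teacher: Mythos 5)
Your proof is correct: the eigenvalue form of the Matrix--Tree Theorem, the trace identity $\sum_{i\ge 1}\lambda_i = 2m$, and AM--GM give exactly the stated bound, and your handling of the disconnected and small-$n$ cases is fine. The paper itself does not prove this theorem---it simply cites Grimmett's 1976 note---and the spectral argument you give is essentially Grimmett's original proof, so there is no divergence to report.
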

We will also need the following theorem, which shows that trees chosen randomly from $t(K_n)$  have small maximum degree with high probability. Let $\Delta(G)$ denote the maximum degree of a graph $G$. 
\begin{theorem}[{\cite[Theorem 1]{moon}}]\label{moon} Let $T$ be chosen uniformly at random from $t(K_n)$. Then
$$\mathbb{P}[\Delta(T) > \ell] \le \frac{n}{\ell!}.$$
\end{theorem} 

\section{Proof of Theorem \ref{main thm 1}}

 Let $k$ be an integer. Let $T_1,\ldots, T_k$ be trees chosen uniformly at random from $t(K_n)$. Let $M_n:=k(n-1)-|\cup_i E(T_i)|$. In order to show Theorem \ref{main thm 1} we will show the following two claims.
\begin{claim}\label{claim 1}We have that
\begin{equation}\label{first upper bound}
\mathbb{P}[M_n=a]\leq \frac{(k(k-1))^a}{a!}.
\end{equation}
\end{claim}
\begin{claim}\label{claim 2}
Moreover, if we know that $k=O(n^\alpha)$ and $a\leq n^{3\alpha} $, we can improve the above upper bound to
\begin{equation}\label{second upper bound}
\mathbb{P}[M_n=a]\leq \left(1+o(1)\right)\mathbb{P}[Po(k(k-1))=a].
\end{equation}
\end{claim}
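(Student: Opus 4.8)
The plan is to obtain Claim \ref{claim 2} by sharpening the first-moment estimate behind Claim \ref{claim 1}. That estimate really controls $\mathbb{P}[M_n\ge a]$, and the discrepancy between $\lambda^a/a!$ (writing $\lambda:=k(k-1)$) and $\mathbb{P}[Po(\lambda)=a]=e^{-\lambda}\lambda^a/a!$ is exactly the price of passing from ``$\ge a$'' to ``$=a$''. So I would write $\{M_n=a\}$ out \emph{exactly} and run an inclusion-exclusion that forbids any edge-coincidence beyond the $a$ prescribed ones; the suppressed factor $e^{-\lambda}$ is what this inclusion-exclusion puts back.

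In detail: with $m_f$ the number of trees through the edge $f$, we have $M_n=\sum_f(m_f-1)^+$, so $\{M_n=a\}$ is the disjoint union, over ``multiplicity profiles'' (a set $F$ of edges with $m_f\ge 2$, values $(m_f)_{f\in F}$ with $\sum_{f\in F}(m_f-1)=a$, and $m_f\le1$ off $F$), of the corresponding exact events. The first step is to isolate the dominant profile, $|F|=a$ with every $m_f=2$, and to bound the rest: every other profile forces some edge into at least three trees, and the probability of such a profile is estimated using Theorem \ref{kirch} (which gives $\mathbb{P}[e\in T_i]=2/n$ in $K_n$ by symmetry), Theorem \ref{prod} to decouple the edges inside each single tree, and independence of $T_1,\dots,T_k$ across trees, while the number of such profiles is controlled by crude counting of the relevant edge-configurations. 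Since the ``remainder'' of a non-dominant profile is again a configuration of total excess $a'<a$, this step closes by induction on $a$, and its losses stay below $o(1)\cdot\mathbb{P}[Po(\lambda)=a]$ provided $k=O(n^\alpha)$ and $a\le n^{3\alpha}$. The second step treats the dominant profile: choosing $a$ distinct edges $e_1,\dots,e_a$ and for each a pair of trees $\{i_t,j_t\}$, forcing $e_t\in T_{i_t}\cap T_{j_t}$ contributes a factor $\le(2/n)^{2a}$ (Theorem \ref{prod} within each tree, independence across trees), so after the symmetry factor $1/a!$ the dominant term is at most $\frac{1}{a!}\bigl(\binom{n}{2}\binom{k}{2}(2/n)^2\bigr)^{a}=(1+o(1))\lambda^a/a!$ times the conditional probability that \emph{no other} (edge, pair of trees) is a coincidence. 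The third step is to show that this last probability is $(1+o(1))e^{-\lambda}$: expanding it by inclusion-exclusion over the $\binom{n}{2}\binom{k}{2}$ potential coincidences, the alternating sum telescopes to $(1+o(1))e^{-\lambda}$ because those events behave, to leading order, like an independent Poissonian family of total rate $\lambda(1+o(1))$, and forcing in the $a$ prescribed coincidences only pushes the remaining ones down (negative correlation, via Theorem \ref{prod} and the Rayleigh monotonicity law), so the conditioning costs only a factor $1+o(1)$. Multiplying the three steps yields $\mathbb{P}[M_n=a]\le(1+o(1))e^{-\lambda}\lambda^a/a!$.

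The hard part is the third step. One cannot bound the inclusion-exclusion sums $S_r$ term by term: each $S_r$ can be pinned to $\lambda^r/r!$ only up to a small multiplicative error, and a uniform error $\delta$ propagates to $\delta e^{\lambda}$ in the alternating sum, precisely the factor we are trying to remove. So the Poisson-type cancellation must be kept intact, by showing that the deviations of $S_r$ from $\lambda^r/r!$ are themselves of a cancelling form (they come only from two trees sharing a common third, and from a single edge being charged to several pairs, both lower order in the exponent), and by tracking how the prescribed coincidences perturb the remaining ones. Quantifying all of this is where the three hypotheses are consumed: the per-coincidence correction turns out to be polynomially small in $n$, and demanding that its $a$-th power still be $1+o(1)$ for every $a$ up to $n^{3\alpha}$, with $k=O(n^\alpha)$, is exactly the inequality $11\alpha<1$; it is also here that uniformity of the $o(1)$ over the whole range $a\le n^{3\alpha}$ must be checked.
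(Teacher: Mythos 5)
Your route is genuinely different from the paper's, but as written it has a gap in exactly the place you flag as ``the hard part,'' and that gap is not closed. The third step is only asserted: ``the alternating sum telescopes to $(1+o(1))e^{-\lambda}$ because those events behave, to leading order, like an independent Poissonian family'' is the statement to be proved, not an argument; and you correctly observe that the naive term-by-term bound $|S_r-\lambda^r/r!|\le\delta\lambda^r/r!$ is useless because $\delta$ inflates to $\delta e^{\lambda}$, undoing the very factor you need. The proposed fix --- that ``the deviations of $S_r$ from $\lambda^r/r!$ are themselves of a cancelling form'' and that conditioning on the $a$ prescribed coincidences ``costs only a factor $1+o(1)$'' by negative correlation --- is again only gestured at. Theorem~\ref{prod} gives negative correlation for edges inside one tree, and independence handles distinct trees, but the inclusion-exclusion runs over (edge, tree-pair) coincidences, whose joint structure across overlapping pairs is precisely what you would need to pin down; you don't. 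Step~1's reduction to the all-pairwise profile ``closes by induction on $a$'' is similarly stated without the estimate that would make the induction run. So the scheme identifies the right quantity but does not establish it.

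The paper avoids the cancellation problem entirely by a direct counting argument in which the $e^{-\lambda}$ factor appears for a concrete structural reason rather than from an alternating sum. Having fixed $T_1,\dots,T_{i-1}$ and a set $S_i$ of $\ell_i$ edges to reuse, it counts spanning trees of the \emph{edge-depleted} graph $G_{S_i}=K_n\setminus(U_i\setminus S_i)$ that contain $S_i$. Grimmett's bound gives $|\tau(G_{S_i})|\le Ne^{2-2i+O(n^{3\alpha}/n)}$ --- smaller than $N$ precisely because roughly $2(i-1)n$ edges have been removed --- and multiplying over $i=2,\dots,k$ yields the $e^{-k(k-1)}$ automatically. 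The electrical-network tools you invoke are used there too, but for a different purpose: Rayleigh monotonicity and the length-$2$ parallel-path network bound $\mathbb{P}[e\in T]\le 2/(n-O(n^{7\alpha}))$ for $T$ uniform in $\tau(G_{S_i})$, after first restricting to tuples with $\Delta(T_j)\le n^{4\alpha}$ (the exceptional set $\mathcal{N}_2$ being killed by Moon's max-degree tail). If you want to salvage your inclusion-exclusion route, the standard rigorous substitute is a Chen--Stein / local-dependency Poisson approximation argument, but that is a different machine than what you sketch, and the paper's counting proof is both shorter and more elementary.
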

While the claims only show upper bounds, a straightforward calculation yields the desired asymptotic results:
\begin{proof}[Proof of Theorem \ref{main thm 1}]
Set $\mathcal N:=\{a\mid \mathbb{P}[M_n = a] > \mathbb{P}[Po(k(k-1))=a]\}$.
As probabilities must have total sum $1$, we know that
\begin{equation}  \sum_{a=0}^{\infty} \vert \mathbb{P}[M_n = a] - \mathbb{P}[Po(k(k-1))=a] \vert = 2\sum_{a\in \mathcal{N}} \mathbb{P}[M_n = a] - \mathbb{P}[Po(k(k-1))=a] 
\end{equation}
We split the above sum into two parts
\[
S_1= 2\sum_{a\in\mathcal N_{\leq n^{3\alpha}}} \mathbb{P}[M_n = a] - \mathbb{P}[Po(k(k-1))=a],
\]
\[
S_2= 2\sum_{a\in\mathcal N_{> n^{3\alpha}}} \mathbb{P}[M_n = a] - \mathbb{P}[Po(k(k-1))=a],
\]where $\mathcal N_{\leq n^{3\alpha}}:=\{a\mid \mathbb{P}[M_n = a] > \mathbb{P}[Po(k(k-1))=a]$ and $a\leq n^{3\alpha}\}$ and $\mathcal N_{> n^{3\alpha}}:=\{a\mid \mathbb{P}[M_n = a] > \mathbb{P}[Po(k(k-1))=a]$ and $a> n^{3\alpha}\}$.
\\
\\
Using Claim \ref{claim 1}, we can upper bound $S_1$ by 
\begin{align*}
S_1\leq 2\sum_{a\in\mathcal N_{\leq n^{3\alpha}}}o(1)\mathbb{P}[Po(k(k-1))=a]=o(1).
\end{align*}To upper bound $S_2$ we use Claim \ref{claim 2} to obtain
\begin{align*}
S_2\leq 2\sum_{a\in\mathcal N_{> n^{3\alpha}}}\frac{(k(k-1))^a}{a!}(1-e^{-k(k-1)})\leq 2\sum_{a=n^{3\alpha}}^\infty\frac{(k(k-1))^a}{a!}=o(1),
\end{align*}where the last equality holds because $k=O(n^\alpha)$ and $a>n^{3\alpha}$. The upper bounds on $S_1$ and $S_2$ imply our result. 
\end{proof}
Now we show the desired claims:
\begin{proof}[Proof of Claim \ref{claim 1}]We wish to upper bound the number of $k$-tuples $(T_1,\ldots,T_k)$ such that their union contains exactly $k(n-1)-a$ edges. To this end, let $(\ell_2,\ldots,\ell_k)$  a partition of $a$ (that is, $\sum_i \ell_i=a$). We run the following algorithm.
\begin{enumerate}
\item First, choose $T_1$. 
\item For $i=2,3,\ldots, k$: 
\begin{enumerate}
\item Having chosen $T_1,\ldots, T_{i-1}$  choose $\ell_i$ edges in $T_1\cup\ldots\cup T_{i-1}$. Call this set of edges $S_i$. 
\item Complete $S_i$ into a tree without using any other edges in  $T_1\cup\ldots\cup T_{i-1}$. Call this resulting tree $E(T_i)$. If $S_i$ cannot be completed into a tree, then return nothing. 
\end{enumerate}
\item Return $(T_1,\ldots, T_k)$. 
\end{enumerate}
We now upper bound the number of outputs we can get after running this algorithm. Clearly, we have $N_T$ ways to perform step 1. Also, the number of ways to perform step 2a (at iteration $i$) is given by
\[
{ | T_1\cup\ldots\cup T_{i-1}|\choose \ell_i}.
\]We have a clear upper bound given by
\begin{equation}\label{iteration i}
{ | T_1\cup\ldots\cup T_{i-1}|\choose \ell_i} \leq \frac{((i-1)(n-1))^{\ell_i}}{\ell_i!}\leq \frac{n^{\ell_i}(i-1)^{\ell_i}}{\ell_i!}.
\end{equation}
In order to upper bound step 2b (at iteration $i$), we need to upper bound the number of trees that contain the set $S_i$. First of all, note that for an edge $e \in E(K_n)$ and $T\in t (K_n)$ is chosen at random, then $\mathbb{P}[e\in T]=2/n$. Therefore, Theorem \ref{prod} gives that the number of trees that contain $S_i$ is upper bounded by
\begin{equation}\label{step 2b}
\frac{N_T2^{\ell_i}}{n^{\ell_i}}.
\end{equation}
Combining equations \eqref{iteration i} and \eqref{step 2b}, together with the upper bound on step 1, we obtain an upper bound on the number of outputs of
\begin{equation}\label{bad upper bound}
N_T^k\prod_{i=2}^k \frac{(2(i-1))^{\ell_i}}{\ell_i!}.
\end{equation}Now we add over all possible partitions of $a$ to obtain 
\begin{align*}
\sum_{\ell_2+\ldots+\ell_k=a} N_T^k\prod_{i=2}^k \frac{(2(i-1))^{\ell_i}}{\ell_i!}&=\frac{N_T^k}{a!}\sum_{\ell_2+\ldots+\ell_k=a}a!\prod_{i=2}^k \frac{(2(i-1))^{\ell_i}}{\ell_i!} \nonumber\\&=N_T^k\frac{(2+4+\ldots+2(k-1))^a}{a!}\nonumber\\
&=\frac{N_T^k (k(k-1))^a}{a!},
\end{align*}where the second equality is due to the multinomial theorem. Dividing by $N_T^k$ gives \eqref{first upper bound}.
\end{proof}
Before proving Claim \ref{claim 2}, we prove a lemma.
\begin{lemma} \label{treeprob}
Let $G$ be a graph with minimum degree $\delta = n - k$. Let $e \in E(G)$ be any edge. Choose a tree $T$ uniformly at random from $t(G)$. Then
$$\mathbb{P}[e \in T] \le \frac{2}{n - 2k + 2}.$$
\end{lemma}
\begin{proof}
Consider some edge $e = ab \in G$. There are at least $n - 2k$ paths of length $2$ from $a$ to $b$. Let $G'$ be the electrical network that consists of the edge $ab$ and every path of length $2$ from $a$ to $b$, each edge having resistance $1$, and with $v(b) = 0$ and $v(a)$ such that $\sum_{ea \in E(G')}i(ea) = 1$. We shall find an upper bound on the probability that $ab$ is in $T$ by bounding $i_{G'}(ab)$ and using Theorem \ref{kirch}.

\begin{center} \begin{circuitikz}
\draw
(0,0) -- (4,0)
(0,0) -- (2,1)
(2,1) -- (4,0)
(0,0) -- (2,3)
(2,3) -- (4,0)
(0,0) -- (2,1.5)
(2,1.5) -- (4,0)
;
\node [above] at (0,0) {$a$};
\draw[fill] (0,0) circle [radius=0.05];
\draw[fill] (4,0) circle [radius=0.05];
\node [above] at (4,0) {$b$};
\draw[fill] (2,1) circle [radius=0.05];
\draw[fill] (2,1.5) circle [radius=0.05];
\draw[fill] (2,3) circle [radius=0.05];
\draw[fill] (2,1.8) circle [radius=0.025];
\draw[fill] (2,2.1) circle [radius=0.025];
\draw[fill] (2,2.4) circle [radius=0.025];
\draw[fill] (2,2.7) circle [radius=0.025];

\end{circuitikz}
\end{center}
We may convert each path of length $2$ into a single edge with resistance $2$.

\begin{center}

\begin{tikzpicture}
\node[circle,fill=black,inner sep=1pt,draw] (a) at (0,0) {};
\node[circle,fill=black,inner sep=1pt,draw] (b) at (4,0) {};
\draw[] (a) edge[bend left=100] (b);
\draw[] (a) edge[bend left=10] (b);
\draw[] (a) edge (b);
\node [below] at (0,0) {$a$};
\node [below] at (4,0) {$b$};
\node [below] at (2,0) {$R=1$};
\node [above] at (3,0.2) {$R=2$};
\node [above] at (3,1.2) {$R=2$};
\draw[fill] (2,0.8) circle [radius=0.025];
\draw[fill] (2,0.4) circle [radius=0.025];
\draw[fill] (2,0.6) circle [radius=0.025];
\draw[fill] (2,1) circle [radius=0.025];
\end{tikzpicture}

\end{center}
As these resistors are in parallel, for the purposes of computing $v(a)$ $G'$ is equivalent to a single edge with resistance
\begin{equation*} \label{resistance}
\frac{1}{R_{ab}} \ge (n - 2k)\frac{1}{2} + 1 = \frac{n - 2k + 2}{2}.
\end{equation*}
Thus $R_{ab} \ge \frac{2}{n - 2k + 2}$.
Using Ohm's law, we see that
\begin{equation*} \label{voltage}
v(a) - v(b) = v(a) \le \frac{2}{n - 2k + 2},
\end{equation*} 
as the current is $1$ by construction. Using Ohm's law on the single edge $ab$, we see that
\begin{equation*} 
\frac{2}{n - 2k + 2} \ge i_{G'}(ab). 
\end{equation*}
By construction, $G'$ can be embedded into $G$, so, because of Rayleigh Monotonicity Law, $i_{G'} \ge i_{G}$. Therefore, if $T$ is chosen uniformly at random from $t(G)$, then by Theorem \ref{kirch},
\begin{equation*}
\mathbb{P}\left[ e \in T \right ] \le \frac{2}{n - 2k + 2}.
\end{equation*}
\end{proof}

\begin{proof}[Proof of Claim \ref{claim 2}]Let 
\[
\mathcal{M}(a)=\{(T_1,\ldots,T_k) : \enskip |\cup_i E(T_i)|=(k-1)(n-1)-a\}.
\]First, we partition $\mathcal{M}(a)$ into two sets.  Let
\[
\mathcal{N}_1(a)=\{(T_1,\ldots,T_k)\in \mathcal{M}(a): \enskip \Delta(T_i)\leq n^{4\alpha} \quad \forall i\},
\]and let $\mathcal{N}_2(a)=\mathcal{M}(a)\backslash \mathcal{N}_1(a)$. We wish to upper bound $|\mathcal{M}(a)|$. Because most trees have small maximum degree, $\mathcal{N}_2(a)$ will be negligible. We have good control over $\mathcal{N}_1(a)$ because the graph obtained by deleting from the complete graph some trees of small maximum degree has large minimum degree, so no edge will be included in too many spanning trees. We will prove that $|\mathcal{N}_2(a)|=o(|\mathcal{N}_1(a)|)$ and that $|\mathcal{N}_1(a)|$ satisfies the desired upper bound. 
\\
\\ 
Let $(\ell_2,\ldots,\ell_k)$ be a partition of $a$. We run the following algorithm.
\begin{enumerate}
\item Choose $T_1$ such that $\Delta(T_1)\leq n^{4\alpha}$. 
\item For $i=2,3,\ldots, k$: 
\begin{enumerate}
\item Let $U_i=T_1\cup\ldots\cup T_{i-1}$ and choose $\ell_i$ edges in $U_i$. Call this set of edges $S_i$. 
\item Let $G_{S_i}=K_n\backslash(U_i\backslash S_i)$. Complete $S_i$ into a tree (in $G_{S_i}$) with max degree at most $n^{4\alpha}$. Call it $T_i$. If no such tree exists, return nothing. 
\end{enumerate}
\item Return $(T_1,\ldots,T_k)$.
\end{enumerate}
Now we upper bound the number of outputs the above algorithm can produce. Step 1 can be upper bounded by $N_T$. Step 2a (iteration $i$) can be performed in at most
\begin{equation}\label{factor 1}
{|U_i| \choose \ell_i}\leq \frac{|U_i|^{\ell_i}}{\ell_i!}\leq\frac{((i-1)n)^{\ell_i}}{\ell_i!}
\end{equation} ways. Let $T$ be chosen uniformly at random from $t(G_{S_i})$. Note that an upper bound on Step 2b (iteration $i$) is given by the total number of trees in $G_{S_i}$ containing $S_i$, which is given by
\[
\mathbb{P}[S_i\subseteq T]\cdot |t(G_{S_i})|.
\]We upper bound each factor individually. For the latter factor we use Theorem \ref{grimmett}. Using that $\vert E(G_{S_i}) \vert \le n(n-1)/2 - (n-1)(i-1)$ and that $(1 + x/n)^{n} = e^{x + O(x^2/n)}$, we see that 
\begin{equation} \label{tau} \begin{split} 
|t(G_{S_i})| &\le \frac{1}{n} (n + 2(1-i))^{n-1} \\
&= n^{n-2} (1 + 2(1-i)/n)^{n-1} \\
& = N_T e^{2-2i+O(n^{2\alpha}/n)}. 
\end{split}
\end{equation}By Theorem \ref{prod}, we have that
\begin{equation}\label{Si probability}
\mathbb{P}[S_i\subseteq T]\leq \left( \max_{e\in G_{S_i}}\quad \mathbb{P}[e\in G_{S_i}]\right)^{\ell_i}.
\end{equation}
By construction, we have that $\Delta(T_t) < n^{4\alpha}$ for all $t$, so $\delta(G_{S_i}) \ge n - (i-1)n^{4\alpha}$, where $\delta(H)$ is the minimum degree of a vertex in $H$. Let $e \in E(G_{S_i})$. By Lemma \ref{treeprob}, we see that for a tree chosen uniformly at random from $G_{S_i}$, 
\begin{equation} \label{probability}
\mathbb{P}[e \in T] \le \frac{2}{n - 2(i - 1)n^{4 \alpha} + 2} = \frac{2}{n - O(n^{5 \alpha})}.
\end{equation}
Using this on \eqref{Si probability}, we obtain
\begin{equation}\label{factor 2}
\mathbb{P}[S_i\subseteq T]\leq \left(\frac{2}{n - O(n^{5\alpha})}\right)^{\ell_i}.
\end{equation}
Putting together equations \eqref{factor 1}, \eqref{tau}, and \eqref{factor 2}, we obtain an upper bound on the number of ways step 2 (iteration $i$) can be performed of
\begin{align}
&\frac{((i-1)n)^{\ell_i}}{\ell_i!}\left(N_T e^{2-2i+O(n^{2\alpha}/n)}\right)\left(\frac{2}{n - O(n^{5\alpha})}\right)^{\ell_i}\nonumber\\
&=\frac{N_T(2(i-1))^{\ell_i}}{e^{2(i-1)}\ell_i!}\left(1+O\left(\frac{n^{8\alpha}}{n}\right)\right).
\end{align}Hence, the number of ways to perform step 2 is at most
\[
N_T^k\prod_{i=2}^k \frac{(2(i-1))^{\ell_i}}{e^{2(i-1)}\ell_i!}\left(1+O\left(\frac{n^{8\alpha}}{n}\right)\right)=N_T^ke^{-k(k-1)}\left(1+O\left(\frac{n^{9\alpha}}{n}\right)\right)\prod_{i=2}^k\frac{(2(i-1))^{\ell_i}}{\ell_i!}.
\]Now we add over all possible partitions of $a$ to obtain
\[
N_T^ke^{-k(k-1)}\left(1+O\left(\frac{n^{9\alpha}}{n}\right)\right)\sum_{\ell_2+\ldots+\ell_k=a}\prod_{i=2}^k\frac{(2(i-1))^{\ell_i}}{\ell_i!}.
\]Applying the multinomial theorem, we obtain the desired upper bound
\begin{equation}\label{N1}
|\mathcal{N}_1(a)|\leq \frac{N_T^ke^{-k(k-1)}(k(k-1))^a}{a!}\left(1+O\left(\frac{n^{9\alpha}}{n}\right)\right).
\end{equation}Now we upper bound $|\mathcal{N}_2(a)|$. 
From Theorem \ref{moon}, we see that
\[
\mathbb{P}\left[ \Delta(T) > n^{4\alpha} \right] \le \frac{n}{(n^{4\alpha})!}.
\]Hence, the number of $k$-tuples that have at least one tree with max degree more than $n^{4\alpha}$ is upper bounded by
\[
k\cdot\frac{n}{(n^{4\alpha} )!}\cdot N_T^k.
\]Because the above is an upper bound for $|\mathcal{N}_2(a)|$, using a straight forward calculation (see appendix) we obtain
\begin{equation}\label{N2}
|\mathcal{N}_2(a)|\leq N_T^k\frac{e^{-k(k-1)}(k(k-1))^a}{a!}\cdot O\left(\frac{n^{9\alpha}}{n}\right).
\end{equation}Because $|\mathcal{M}(a)|=|\mathcal{N}_1(a)|+|\mathcal{N}_2(a)|$, using equations \eqref{N1} and \eqref{N2} we obtain 
\[
|\mathcal{M}(a)|\leq N_T^k \frac{e^{-k(k-1)}(k(k-1))^a}{a!}\left(1+O\left(\frac{n^{9\alpha}}{n}\right)\right).
\]As $\alpha < 1/9$, dividing by $N_T^k$ gives the desired claim.

\end{proof}
\section{Upper tail estimates}In this section we present some new upper tail estimates that might be of independent interest. Let $X_{n, m}$ denote the number of spanning trees in the random graph $G(n, m)$. Our main goal in this section is to prove the following lemma. 

\begin{lemma}\label{concentrationlemma} Let $0 < \delta < 1/2$ be a constant, and let $0 < \alpha < 1/9$. There is a constant C depending on $\delta$ such that for any $\delta n^2 \le m \le (1 - \delta) n^2$, and $k=O(n^{\alpha})$, we have
\begin{equation*}
\mathbb{E}X_{n,m}^k \le C^k (\mathbb{E}X_{n,m})^k. 
\end{equation*}
\end{lemma}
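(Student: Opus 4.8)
The plan is to relate the $k$-th moment of $X_{n,m}$ to the union problem studied in Section 3, and then apply Claim \ref{claim 1}. First I would observe that $\mathbb{E} X_{n,m}^k$ counts, with the appropriate normalization, the expected number of ordered $k$-tuples of spanning trees $(T_1,\dots,T_k)$ in $K_n$ that all survive in $G(n,m)$. Concretely, $\mathbb{E} X_{n,m}^k = \sum_{(T_1,\dots,T_k)} \mathbb{P}[T_1,\dots,T_k \subseteq G(n,m)]$, where the sum is over all $N^k$ ordered $k$-tuples of spanning trees of $K_n$. Since $G(n,m)$ is a uniformly random set of $m$ edges and $|\cup_i T_i|$ edges must all be present, $\mathbb{P}[T_1,\dots,T_k \subseteq G(n,m)] = \binom{\binom{n}{2} - |\cup_i T_i|}{m - |\cup_i T_i|} \big/ \binom{\binom{n}{2}}{m}$, which depends on the tuple only through $u := |\cup_i T_i|$. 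Writing $M = k(n-1) - u$ as in Section 3, the number of tuples with a given value of $M = a$ is exactly $N^k \mathbb{P}[M_n = a]$, so
\[
\mathbb{E} X_{n,m}^k = N^k \sum_{a \ge 0} \mathbb{P}[M_n = a] \cdot \frac{\binom{\binom{n}{2} - k(n-1) + a}{m - k(n-1) + a}}{\binom{\binom{n}{2}}{m}}.
\]

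Next I would bound the hypergeometric factor. Since $p := m/\binom{n}{2}$ lies in a fixed compact subinterval of $(0,1)$ and $k(n-1) = O(n\log n) = o(n^2)$, a routine estimate gives $\binom{\binom{n}{2} - u}{m - u} / \binom{\binom{n}{2}}{m} = (1+o(1))^u \, p^{u}$ uniformly over the relevant range of $u$; more precisely the ratio is at most $(p + o(1))^{u} \le (c_1 p)^{u}$ for a constant $c_1$ depending only on $\delta$, once $n$ is large. Substituting $u = k(n-1) - a$ and using $\mathbb{E} X_{n,m} \ge N p^{n-1}(1-o(1))$ (indeed $\mathbb{E} X_{n,m} = N \binom{\binom{n}{2}-(n-1)}{m-(n-1)}/\binom{\binom n2}{m} = (1\pm o(1)) N p^{n-1}$ by the same estimate with $k=1$), the factor $N^k (c_1 p)^{k(n-1)}$ is at most $C_0^k (\mathbb{E} X_{n,m})^k$ for a suitable constant $C_0$. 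What remains is the sum $\sum_{a\ge 0} \mathbb{P}[M_n = a] (c_1 p)^{-a}$. By Claim \ref{claim 1}, $\mathbb{P}[M_n = a] \le (k(k-1))^a / a!$, so this sum is at most $\sum_{a \ge 0} (k(k-1)/(c_1 p))^a / a! = \exp(k(k-1)/(c_1 p))$. With $k = O(\log n)$ this is $\exp(O(\log^2 n))$, which is emphatically \emph{not} of the form $C^k$, so a naive application of Claim \ref{claim 1} is too weak.

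The main obstacle, therefore, is controlling this last sum sharply enough. The fix is to use the sharper bound from Claim \ref{claim 2}: for $a \le n^{3\alpha}$ we have $\mathbb{P}[M_n = a] \le (1+o(1)) e^{-k(k-1)} (k(k-1))^a/a!$, which contributes a harmless constant since $\sum_a e^{-k(k-1)} (k(k-1))^a/a! (c_1 p)^{-a} = e^{-k(k-1)} e^{k(k-1)/(c_1p)} = e^{k(k-1)(1/(c_1p) - 1)}$ — still bad unless $c_1 p$ can be taken close to $1$. Here is where the estimate of the hypergeometric ratio must be done carefully: the point is that for the dominant terms, $a$ is small (of constant order, by the Poisson heuristic), so $u = k(n-1) - a$ is essentially $k(n-1)$, and one should not lose a constant factor per edge but rather track that $\binom{\binom n2 - u}{m-u}/\binom{\binom n2}{m} = p^u \exp(O(u^2/n^2) + O(u k (n-1)/n^2))$; since $u = O(n\log n)$, the correction is $\exp(O(\log^2 n \cdot \log n /n)) \cdot$ wait — more carefully, $u^2/n^2 = O(\log^2 n)$, which is again too large. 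The correct resolution is that the ratio equals $\prod_{j=0}^{u-1} \frac{m-j}{\binom n2 - j}$, and each factor is $p(1 + O(j/(n^2 p)))$, but the product of the error terms over $j < u = O(n\log n)$ is $\exp(O(u^2/n^2)) = \exp(O(\log^2 n))$. So I must instead absorb this into the comparison with $(\mathbb{E}X_{n,m})^k$: we have $(\mathbb{E}X_{n,m})^k = N^k p^{k(n-1)} \exp(O(k(n-1)^2/n^2)) = N^k p^{k(n-1)} \exp(O(k))$, and the ratio $\mathbb{E}X_{n,m}^k / (N^k p^{k(n-1)})$ relative to $((\mathbb{E}X_{n,m})/(Np^{n-1}))^k$ differs only in the \emph{pairwise overlap} terms — that is, $\mathbb{E}X_{n,m}^k/(\mathbb{E}X_{n,m})^k = \sum_a \mathbb{P}[M_n=a] \cdot q_a$ where $q_a = \binom{\binom n2 - k(n-1)+a}{m-k(n-1)+a}\binom{\binom n2}{m}^{k-1} / \binom{\binom n2 - (n-1)}{m-(n-1)}^k$, and a direct computation shows $q_a = p^{-a}(1+O(n^{10\alpha}/n))^{\pm}$-type — concretely $q_a \le p^{-a} e^{O(k^2)}$ because the $\binom n2$-sized corrections cancel between numerator and denominator up to the $\binom{k}{2}$ ``interaction'' terms coming from the edges shared among the trees. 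Granting $q_a \le C_1^k p^{-a}$ (with $C_1$ depending on $\delta$), the sum becomes $\sum_a \mathbb{P}[M_n = a] p^{-a}$, and now I split at $a = n^{3\alpha}$: the tail $a > n^{3\alpha}$ is negligible by Claim \ref{claim 1} and $k = O(\log n)$ (the terms decay super-exponentially), while for $a \le n^{3\alpha}$, Claim \ref{claim 2} gives $\sum_a (1+o(1)) e^{-k(k-1)}(k(k-1))^a/(a! p^a) \le (1+o(1)) e^{-k(k-1)} e^{k(k-1)/p} = e^{O(k^2/p)}$, and since $k = O(\log n)$ forces $k^2 = O(\log^2 n)$... this is still not $C^k$. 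I expect that the genuinely correct statement requires $k(k-1) = O(k)$, i.e. this lemma as literally stated needs $k$ bounded, OR the interaction terms in $q_a$ actually carry a compensating factor $e^{-k(k-1)(1/p - 1)\cdot(\text{something})}$; the precise accounting of these $\binom k2$ overlap corrections against the $e^{-k(k-1)}$ in Claim \ref{claim 2} is the crux, and I would carry it out by writing $\mathbb{E}X_{n,m}^k$ and $(\mathbb{E}X_{n,m})^k$ both as explicit products of $\binom n2 - j$ terms and cancelling, deferring the bookkeeping to the Appendix as the paper does elsewhere.
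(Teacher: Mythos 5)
Your overall strategy is the same as the paper's: write $\mathbb{E}X_{n,m}^k = \sum_a M(a)\, (m)_{k(n-1)-a}/\bigl(\binom{n}{2}\bigr)_{k(n-1)-a}$, split the sum at a threshold of order $k^2$, handle the small-$a$ part with Claim~\ref{claim 2} and the tail with Claim~\ref{claim 1}, and compare against $(\mathbb{E}X_{n,m})^k$. However, the proposal has a genuine gap precisely at the computational crux, and you yourself flag it without resolving it — you end by doubting whether the lemma even holds for $k = O(\log n)$. It does, and the reason is a cancellation your estimate of the hypergeometric ratio is too crude to see.

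The problem is your bound $\binom{\binom n2-u}{m-u}/\binom{\binom n2}{m} \le (c_1 p_m)^u$. With $u = k(n-1)-a$ this throws away the quadratic correction that the paper keeps via the falling-factorial estimate \eqref{fallingfactorial}: since $k(n-1) = O(n\log n) = o(n^{4/3})$, one has
\[
\frac{(m)_{k(n-1)-a}}{\bigl(\tbinom n2\bigr)_{k(n-1)-a}} = p_m^{\,k(n-1)-a}\exp\!\left(-\frac{k^2(1-p_m)}{p_m} + O(1/n)\right),
\]
and likewise $\mathbb{E}X_{n,m} = N p_m^{n-1}\exp(-(1-p_m)/p_m + O(1/n))$. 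Now the three $\exp(\Theta(k^2))$ factors you worried about cancel exactly: combining the factor $e^{-k(k-1)}$ from Claim~\ref{claim 2}, the factor $e^{k(k-1)/p_m}$ from summing $(k(k-1)/p_m)^a/a!$ over $a$, and the factor $\exp(-k^2(1-p_m)/p_m)$ from the display above, the total exponent is
\[
-k(k-1) + \frac{k(k-1)}{p_m} - \frac{k^2(1-p_m)}{p_m} = \frac{1-p_m}{p_m}\bigl(k(k-1)-k^2\bigr) = -\frac{k(1-p_m)}{p_m},
\]
which is exactly $C_2^k$ with $C_2 = e^{-(1-p_m)/p_m}$, i.e.\ exactly what $(\mathbb{E}X_{n,m})^k$ contributes. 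The tail $a > \lceil k^2 e/p_m\rceil$ is then killed by Claim~\ref{claim 1} plus Stirling, as you anticipated. So the fix is not a new idea but a sharper estimate: replace your multiplicative $(c_1 p_m)^u$ bound by the paper's $(N)_\ell = N^\ell \exp(-\ell(\ell-1)/2N + o(1))$ and carry the $k^2$-term through the algebra; the lemma is correct as stated.
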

Using Markov's Inequality, we have that \begin{equation*}
\mathbb{P}\left[X_{n,m} \ge K\mathbb{E}X_{n,m} \right] = \mathbb{P}\left[X_{n,m}^k \ge (K\mathbb{E}X_{n,m})^k \right] \le \left(\frac{C}{K}\right)^k.
\end{equation*}
letting $k=n^{\alpha}$ with $\alpha < 1/9$ and $K=Ce^t$ we obtain the following lemma, which will be used in the proof of the upper bound for the LIL. 
\begin{lemma}\label{XKBound}Let $0<\delta <1/2$ and $0 < \alpha < 1/9$ be constants, and let $t\geq 0$ be a fixed integer. Then there exists a constant $K$ such that for any $\delta n^2\leq m\leq (1-\delta)n^2$ we have:
\[
\mathbb{P}[X_{n,m} \ge K\mathbb{E}X_{n,m}]\leq \exp(t n^{-\alpha}).
\]

In particular, for $k = \log n$, we obtain:
\[
\mathbb{P}[X_{n,m} \ge K\mathbb{E}X_{n,m}]\leq n^{-t}.
\]
\end{lemma}

\begin{remark}
We will only need the upper bound of $n^{-t}$ for fixed integer $t \ge 0$ and will not need the subexponential bound in the remainder of the paper.

Our techniques would not allow the subexponential bound to be made exponential as that would require us to allow $k$ to grow linearly with $n$.
\end{remark}

Before we proceed with the proof of Lemma \ref{concentrationlemma}, we need a little bit of background: For any fixed graph $J$ with $j$ edges, the probability that $J$ appears in $G(n,m)$ is precisely \begin{equation*}
\frac{\displaystyle\binom{\binom{n}{2} - j}{m- j}}{\displaystyle\binom{\binom{n}{2}}{m}}  = \frac{(m)_j}{\left(\binom{n}{2}\right)_j},
\end{equation*}
where $(N)_{\ell} = N(N-1)\dotsb(N-\ell+1)$.

For each $T \in t(K_n)$, let $X_T$ denote the event that ``$T$ appears in $G(n,m)$". Then $X_{n,m} = \sum_{T \in t(K_n)}X_{T}$. Therefore, \begin{equation*}
\mathbb{E}X_T = \frac{(m)_{n-1}}{\left(\binom{n}{2}\right)_{n-1}.}
\end{equation*}
 Thus, by linearity, 
\begin{equation*}
\mathbb{E}X_{n,m} = N_T\frac{(m)_{n-1}}{\left(\binom{n}{2}\right)_{n-1}},.
\end{equation*}

We shall repeatedly use the following estimate, which is proved in the appendix. Let $N, \ell$ such that $\ell = o(N^{2/3})$. Then \begin{equation} \label{fallingfactorial}
(N)_{\ell} = N^{\ell}\exp\left( -\frac{\ell(\ell-1)}{2N} + O(\ell^3/N^2) \right).
\end{equation}

Let $J = \cup_{i=1}^{k} T_i$, where $(T_1, T_2, \dotsc, T_k)$ is a $k$-tuple of elements of $t(K_n)$.  Let $M(a)$ be the number of $k$-tuples of elements of $t(K_n)$ such that $\vert \cup_{i=1}^k E(T_i)\vert = k(n-1)-a$.  Since $X_J = X_{T_1} \cdots X_{T_k}$, we see that \begin{equation}\label{kmoment}
\mathbb{E}X_{n,m}^k = \sum_{(T_1,\dotsc, T_k) \in t(K_n)^k} \mathbb{E}[X_{T_1} \dotsb X_{T_k}] = \sum_{a=0}^{(k-1)(n-1)}M(a) \frac{(m)_{(n-1) k-a}}{(\binom{n}{2})_{(n-1) k-a}}.
\end{equation}

Let $p_m = \frac{m}{\binom{n}{2}}$. By $(\ref{fallingfactorial})$, for all $a$, \begin{equation}\label{fallingpart}
\frac{(m)_{(n-1) k-a}}{\left(\binom{n}{2}\right)_{(n-1) k-a}} \le p_m^{(n-1) k-a}\exp\left(\frac{-k^2(1-p_m)}{p_m} + O(n^{-2/3})\right).
\end{equation} 

In particular,  letting $k=1$ and $a = 0$ gives
\begin{equation}\label{EXn}
\mathbb{E}X_{n,m} = N_Tp_m^{n-1}\exp\left(-\frac{1-p_m}{p_m} + O(n^{-2/3}) \right).
\end{equation}

Now we carry on with the proof of the upper tail estimate.

\begin{proof}[Proof of Lemma \ref{concentrationlemma}]
Recall from (\ref{kmoment}) that \begin{equation*}
\mathbb{E}X_{n,m}^k = \sum_{a=0}^{(k-1)(n-1)}M(a) \frac{(m)_{(n-1) k-a}}{(\binom{n}{2})_{(n-1) k-a}}.
\end{equation*}
We split the RHS into the sum up to $T = \lceil 2 k^2e/p_m \rceil$ and the rest of the sum. For ease of notation we assume that $k$ (and thus $T$) tends to infinity, but the proof can be easily modified if $k$ is bounded. Note that $T \le O(n^{2\alpha})$, so we can apply Claim \ref{claim 2} to obtain
\begin{equation*}
S_{\le T} := \sum_{a=0}^T M(a) \frac{(m)_{(n-1) k-a}}{(\binom{n}{2})_{(n-1) k-a}} \le 2\frac{p_m^{(n-1) k} N_T^k}{e^{k(k-1)}}\exp\left(-\frac{k^2(1-p_m)}{p_m}+ O(n^{-2/3}) \right) \sum_{a=0}^T \frac{( k(k-1))^a}{a!}p_m^{-a}.
\end{equation*}
As \begin{equation*}
\sum_{a=0}^T \frac{( k(k-1))^a}{a!}p_m^{-a} \le \sum_{a=0}^{\infty} \frac{(k(k-1))^a}{a!}p_m^{-a} = e^{ k(k-1)/p_m},
\end{equation*}
we see that
\begin{equation} \label{s1}
S_{\le T} \le 2\frac{p_m^{(n-1) k} N_T^k}{e^{k(k-1)}}\exp\left(-\frac{k^2(1-p_m)}{p_m}+ O(n^{-2/3}) \right) e^{k(k-1)/p_m} \le C_1^k p_m^{(n-1) k} N_T^k
\end{equation}
for an appropriate constant $C_1$. Using Claim \ref{claim 1}, we get an upper bound on $S_2$
\begin{equation*}
S_{> T} := \sum_{a>T} M(a) \frac{(m)_{(n-1) k-a}}{\left(\binom{n}{2}\right)_{(n-1) k-a}} \le p_m^{(n-1) k} N_T^k\exp\left(-\frac{k^2(1-p_m)}{p_m}+ O(n^{-2/3}) \right) \sum_{a>T} \frac{( k(k-1))^a}{a!}p_m^{-a}.
\end{equation*}
Using Stirling's approximation, we have that \begin{equation*}
\sum_{a>T}\frac{( k(k-1))^a}{a!p_m^a} \le \sum_{a>T} \left(\frac{ k^2e}{ap_m} \right)^a \le \sum_{a>T}\left(\frac{1}{2}\right)^a = o(1).
\end{equation*}
Thus,
\begin{equation}\label{s2}
S_{> T} = o(N_T^kp_m^{(n-1) k}).
\end{equation}
So $S_{> T}$ is negligible. Therefore, from $(\ref{s1})$ and $(\ref{s2})$
\begin{equation*}
\mathbb{E}X_{n,m}^k = S_{\le T} + S_{> T} \le C_1^k N_T^kp_m^{(n-1) k}.
\end{equation*}
From $(\ref{EXn})$, we see that \begin{equation*}
(\mathbb{E}X_{n,m})^k = N_T^k p_m^{(n-1) k} \exp\left(- k\frac{1-p_m}{p_m} + O(kn^{-2/3}) \right) \ge C_2^k N_T^k p_m^{(n-1) k},
\end{equation*}
where $C_2 < \exp(-(1-p_m)/(p_m))$. Setting $C :=C_1 C_2^{-1}$, we obtain Lemma \ref{concentrationlemma}.
\end{proof}

\section{Law of the Iterated Logarithm}
Recall that $X_n$ is the number of spanning tree in $G(n, p)$. To prove Theorem \ref{maintheorem3}, for any $\varepsilon >0$ we need to show a lower bound

\begin{equation*}
\mathbb{P}\left[\frac{\log X_{n} - \mu_{n}}{\sigma} \geq (1-\varepsilon)\sqrt{2\log\log n} \text{ for infinitely many }n  \right]=1,
\end{equation*}
and an upper bound
\begin{equation*}
\mathbb{P}\left[ \frac{\log X_n - \mu_n}{\sigma} \geq (1+\varepsilon)\sqrt{2\log\log n} \text{ for infinitely many }n \right] = 0.
\end{equation*}

\begin{remark}
Our proof is by showing that  $\log X_n$ is tightly controlled by the number of edges in $G(n,p)$. We use results of Janson and standard technique for the lower bound, and we use Lemma \ref{XKBound} for the upper bound. As the number of edges is binomially distributed, $\log X_n$ inherits the LIL. 
\end{remark}

\subsection{Lower Bound}

To prove the lower bound of the LIL, we show that there exists a sequence $\{n_{k}\}_{k=1}^{\infty}$ such that for any fixed $\varepsilon > 0$,
\begin{equation*}
\mathbb{P}\left[\frac{\log X_{n_{k}} - \mu_{n_{k}}}{\sigma} \geq (1-\varepsilon)\sqrt{2\log\log n_{k}} \text{ for infinitely many }k  \right]=1.
\end{equation*}

Let $E_{n}$ be the random variable that counts the number of edges in $G(n,p)$, and let $E_{n}^* = (E_n - \mathbb{E}E_n)/\sqrt{\text{Var } E_n}$. Note that $E_n$ is a sum of iid's, so, from the proof of the law of the iterated logarithm in \cite[Chapter 10.2, Theorem 1]{chow2012probability},
 there is some sequence $\{n_k\} = \{a^k \}$ for some integer $a>1$ on which $E_{n_k}^*> (1-\varepsilon)\sqrt{2 \log \log \binom{n_k}{2}}$ infinitely often with probability $1$. Note that $\sqrt{2 \log \log \binom{n_k}{2}} \sim \sqrt{2 \log \log n_k}$, so $E_{n_k}^* > (1-\varepsilon)\sqrt{2 \log \log n_k}$ infinitely often with probability $1$.  From the proof of Theorem 6 in \cite{janson}, we have that
\begin{equation*}
\mathbb{P}\left[E_n^* - \frac{\log X_n - \mu_n}{\sigma} > C\right] = O(1/n)
\end{equation*}
for any positive constant $C$. 

Let $A_k$ be the event that $E_{n_k}^* -(\log X_{n_k} - \mu_{n_k})/\sigma > C$. By the choice of $\{n_k\}$, we have that \begin{equation*}
\sum_{k=1}^{\infty} \mathbb{P}[A_k] = \sum_{k=1}^{\infty} O(a^{-k}) < \infty.
\end{equation*}
So, by the Borel-Cantelli Lemma, $A_k$ holds for only finitely many $k$. Thus \begin{equation*}
E_{n_k}^* \le C + \frac{\log X_{n_k} - \mu_{n_k}}{\sigma}
\end{equation*}
holds for $k$ sufficiently large.

From the definition of $\{n_k\}$, we have that \begin{equation*}
\mathbb{P}\left[E_{n_k}^* > (1-\varepsilon/2)\sqrt{2\log \log n_k} \text{ infinitely often}\right] = 1.
\end{equation*}
Thus, with probability $1$, 
\begin{equation*}
C + \frac{\log X_{n_k} - \mu_{n_k}}{\sigma} > (1-\varepsilon/2)\sqrt{2\log \log n_k}
\end{equation*}
for infinitely many $k$. Since  $(\varepsilon/2) \sqrt{2 \log \log n}>C $ for $n$ sufficiently large, this gives the lower bound of the LIL.

\subsection{Upper Bound}

Fix $\varepsilon > 0$. 
By Lemma \ref{XKBound}, there exists a constant $K$ such that
\begin{equation*}
\mathbb{P}\left[ X_{n,m}\le K\mathbb{E}X_{n,m} \right]\ge1-n^{-4}.
\end{equation*}
Taking logarithms, we have $\log X_{n,m} \le \log \mathbb{E} X_{n,m} + \log K$ with probability at least $1-n^{-4}$.
By equation (\ref{EXn}),
\[
\log \mathbb{E} X_{n,m} =  \log N_T + (n-1) \log p_m  + O(1).
\]
Conditioning on $E_n = m$ in $G(n,p)$ and using the union bound over $\frac{p}{2} n^2 \le m \le \frac{1 + p}{2} n^2$, we have that with probability at least $1-n^{-2}$

\begin{equation}
1_{\mathcal{E}} [\log X_n] \le 1_{\mathcal{E}}\left(\log N_T + (n-1) \log \frac{E_n}{\binom{n}{2}} + O(1)\right),
\end{equation}
where ${\mathcal{E}}$ is the event that $G(n,p)$ has at least $\frac{p}{2} n^2$ edges and at most $\frac{1+p}{2} n^2$ edges, and $1_{\mathcal{E}}$ is the indicator random variable for $\mathcal{E}$. By Chernoff's bound, $1_{\mathcal{E}} =1 $ with probability at least $1 - n^2$, so 
\begin{equation}\label{XnEnBound}
\log X_n \le \log N_T + (n-1) \log \frac{E_n}{\binom{n}{2}} + O(1)
\end{equation}
holds with probability at least $1 - 2n^2$.

Note that $E_n = \text{Bin}(\binom{n}{2}, p)$ is a binomial distribution, so $\text{Var }{E_n} = \binom{n}{2}p(1-p)$ and $\mathbb{E}E_n = \binom{n}{2}p$. Expanding in terms of $E_n^*$, the normalized version of $E_n$, we have that

\begin{align*}
\log \frac{E_n}{\binom{n}{2}} &= \log \left( \frac{\sqrt{\operatorname{Var} E_n} E_n^*}{\binom{n}{2}} + \frac{ \mathbb E E_n}{\binom{n}{2}} \right) \\
&= \log \left( \left( \frac{2p(1-p)}{n(n-1)} \right)^{1/2} E_n^* + p \right) \\
&= \log p + \log \left( 1 + \sqrt{\frac{2(1-p)}{p}} \frac{E_n^*}{\sqrt{n(n-1)}} \right) \\
&= \log p + \sqrt{\frac{2(1-p)}{p}} \frac{E_n^*}{n-1} + O(1/n^2),
\end{align*}
where we take the Taylor expansion to get the last equality. Therefore, with probability at least $1-2n^{-2}$,

\begin{align*}
\log X_n &\le \log N_T + (n-1) \log p + \sqrt{\frac{2(1-p)}{ p}} E_n^* + O(1) = \mu_n + \sigma E_n^* + O(1). \end{align*}
Thus,
\begin{equation} \label{UpperBound}
\frac{\log X_n - \mu_n}{\sigma} \le E_n^* + O(1) 
\end{equation}
holds with probability at least $1 - n^2$.

Since $\sum_{n=1}^{\infty} n^{-2}$ is finite, by the Borel-Cantelli lemma, the event that \eqref{UpperBound} holds for all sufficiently large $n$ happens with probability $1$.  Since $E_n^*$ is the sum of $\binom{n}{2}$ iid random variables, we can use the LIL to conclude that, with probability $1$

\begin{align*}
E_n^* &\le (1+\varepsilon/2)\sqrt{2\log \log {n\choose2}} \\
&\le (1+\varepsilon/2)(\sqrt{2\log \log n} + \sqrt{2}) \\
&= (1+\varepsilon/2)\sqrt{2\log \log n} + O(1)
\end{align*}
holds for sufficiently large $n$. Taking $n$ large enough, we see that

\begin{equation*}
\frac{\log X_n - \mu_n}{\sigma} \le (1+\varepsilon)\sqrt{2\log \log n}
\end{equation*}
holds all but finitely many times with probability $1$. This completes the proof of the upper bound.

\section{Appendix}
\begin{proof}[Proof of equation (\ref{fallingfactorial})]
Let $N, \ell$ be such that $\ell = o(N^{2/3})$. Then,
\begin{equation*} \begin{split}
(N)_{\ell} & = N(N-1) \dotsb (N - \ell + 1) \\
& = N^{\ell} \prod_{i=0}^{\ell-1} (1-i/N) \\
& = N^{\ell} \prod_{i=1}^{\ell -1} \exp\left(-i/N + O(i^2/N^2)\right) \\
& = N^{\ell} \exp\left(\sum_{i=0}^{\ell-1} -i/N + O(i^2/N^2) \right) \\
& = N^{\ell} \exp \left( -\frac{\ell(\ell-1)}{2N} + O(\ell^3/N^2)\right) 
\end{split}\end{equation*}
\end{proof}

\begin{proof}[Proof of equation (\ref{N2})]
We show that \begin{equation*}
\frac{kn/(n^{4\alpha})!}{( e^{-k(k-1)}(k(k-1))^a)/a!} = o(1/n).
\end{equation*}
We use the trivial bounds \begin{equation*}
(n/e)^n < n! < n^n.
\end{equation*}
Then we compute
\begin{equation*} \begin{split}
\frac{(k n)/(n^{4\alpha})!}{( e^{-k(k-1)}(k(k-1))^a)/a!} 
& \le \frac{a! n e^{k(k-1)}}{(n^{4\alpha})!} \\
& \le \frac{(n^{3\alpha})! n^2e^{k^2}}{(n^{4\alpha})!} \\
& \le \frac{(n^{3\alpha})! n^2 e^{O(n^{2\alpha})}}{(n^{4\alpha})!} \\
& \le \frac{(n^{3\alpha})^{n^{3\alpha}} n^2 e^{O(n^{2\alpha})}}{(n^{4\alpha})!} \\
& \le \frac{(en^{3\alpha})^{n^{3\alpha}} }{(n^{4\alpha}/e)^{n^{4\alpha}}} \\
& \le \frac{e^{2n^{4\alpha}}}{n^{\alpha n^{4\alpha}}}\\
& = o(1/n).
\end{split} \end{equation*}
\end{proof}

\section{Acknowledgments}
The authors of the paper would like to show their gratitude to  Sam Payne for organizing SUMRY, the summer program where this research was done, to  Van Vu for his useful commentaries during the draft of this paper, and to  Michael Krivelevich for suggesting some improvements. This research was partially supported by NSF CAREER DMS-1149054.



\end{document}